\title{Coupled BEM-FEM for the convected Helmholtz equation with non-uniform flow
in a bounded domain}
\author{Fabien Casenave$^{1,2}$, Alexandre Ern$^{1}$ and Guillaume Sylvand$^{2}$\\~\\
$^1$ Universit\'{e} Paris-Est, CERMICS (ENPC), 6-8 Avenue Blaise Pascal, Cit\'{e} Descartes,\\
F-77455 Marne-la-Vall\'{e}e, France\\
$^2$ EADS-IW, 18 rue Marius Terce, 31300 Toulouse, France}
\newlength\figureheight
\newlength\figurewidth
\begin{document}

\maketitle

\def\N {\mathbb{N}}
\def\Z {\mathbb{Z}}
\def\R {\mathbb{R}}
\def\C {\mathbb{C}}
\def\P {\mathbb{P}}

\newcommand{\nc}{\newcommand}

\nc{\dsp}{\displaystyle}
\nc{\y}{\mathbf{y}}
\nc{\dt}{\mathit{\Delta t}}
\nc{\dx}{\mathit{\Delta x}}
\nc{\dy}{\mathit{\Delta y}}
\nc{\PRe}{\Re e}

\nc{\ovra}{\overrightarrow}
\nc{\ud}{\mathrm{d}}

\nc{\xt}{(\vec{x},t)}
\nc{\rz}{(r,z)}
\nc{\rzte}{(r,z,\theta)}
\nc{\nablav}{\ovra{\nabla}}
\nc{\x}{\vec{x}}
\nc{\n}{\vec{n}}
\nc{\V}{\vec{v}}
\nc{\vinf}{v_{\infty}}
\nc{\ainf}{a_{\infty}}
\nc{\pas}{{\psi_a^*}}
\nc{\pam}{{\psi_{am}^*}}
\nc{\Vas}{\vec{v_a^*}}
\nc{\kos}{{k_0}}

\nc{\er}{\vec{e}_r}
\nc{\ez}{\vec{e}_z}
\nc{\ete}{\vec{e}_{\theta}}

\renewcommand{\vec}[1]{\boldsymbol{#1}}
\nc{\unvec}[1]{\hat{#1}}

\nc{\grad}{\vec{\nabla}}

\nc{\inc}{{\textrm{inc}}}
\nc{\diffr}{{\textrm{diff}}}

\nc{\todo}[1]{\textit{\textbf{todo: #1 !}}}

\newtheorem{theorem}{Theorem}[section]
\newtheorem{definition}[theorem]{Definition}
\newtheorem{proposition}[theorem]{Proposition}
\newtheorem{cor}[theorem]{Corollary}
\newtheorem{lemma}[theorem]{Lemma}
\newtheorem{remark}[theorem]{Remark}
\newtheorem{proofof}[theorem]{Proof}

\nc{\indices}[4]{
\scriptsize
\begin{array}{c}
1\leq #1\leq #2\\
1\leq #3\leq #4
\end{array}
}

\nc{\indiceslight}[2]{
\scriptsize
\begin{array}{c}
#1\\
#2
\end{array}
}

\begin{abstract}
We consider the convected Helmholtz equation modeling linear acoustic propagation at a fixed frequency in a subsonic
flow around a scattering object.
The flow is supposed to be uniform in the exterior domain far from the object, and potential in the interior domain close to the
object. Our key idea is the reformulation of the original problem using the Prandtl--Glauert transformation on the whole flow domain,
yielding (i) the classical
Helmholtz equation in the exterior domain and (ii) an anisotropic diffusive PDE with skew-symmetric first-order perturbation in the
interior domain such that its transmission condition at the coupling boundary naturally fits the Neumann condition from the
classical Helmholtz equation. Then, efficient off-the-shelf tools can be used to perform the BEM-FEM coupling,
leading to two novel variational formulations for the convected Helmholtz equation.
The first formulation involves one surface unknown and can be affected by resonant frequencies, while the second formulation
avoids resonant frequencies and involves two surface unknowns.
Numerical simulations are presented to compare the two formulations.
\end{abstract}

\section{Introduction}


The scope of the present work is the computation of linear acoustic wave propagation at a fixed frequency in the presence of a flow. 
When the flow is at rest, the simplest model is the classical Helmholtz
equation for the acoustic potential. This equation can be reduced to
finding unknown functions defined on the surface of the scattering
object and solving integral equations which can be effectively
approximated by 
the Boundary Element Method (BEM)~\cite{sauter}.
When the medium of propagation is non-uniform, a volumic
resolution has to be considered using, e.g., a Finite Element Method (FEM).
If such non-uniformities occur only in a given bounded domain, it is possible to benefit from
the advantages of both a volumic resolution and an integral equation formulation. Coupling BEM and FEM
at the boundary of the given bounded domain allows this. Coupled BEM-FEM
can be traced back to McDonald and Wexler~\cite{jcp-biblio2},
Zienkiewicz, Kelly and Bettess~\cite{zienkiewicz1}, Johnson and N\'ed\'elec~\cite{Nedelec} and Jin and Liepa~\cite{jcp-biblio1}.
Over the last decade, such methods have been investigated, among others, for
electromagnetic scattering~\cite{rhipt1, levillain, bemfemcoupled3}, elasticity~\cite{bemfemelasticity}, and
fluid-structure~\cite{bemfemcoupled2} or solid-solid interactions~\cite{nonlinear1, nonlinear2}.
Coupled BEM-FEM for the classical Helmholtz equation can present resonant frequencies,
leading to infinitely many solutions. All these solutions deliver the same acoustic
potential in the exterior domain, but the numerical procedure becomes ill-conditioned.
This problem has been solved in~\cite{buffa2, hiptmair}, where a stabilization of the coupling,
based on combined field integral equations (CFIE), has been
proposed by introducing an additional unknown at the coupling surface.

When the medium of propagation is not at rest, the simplest governing equation is the convected Helmholtz
equation resulting from the linearized harmonic Euler equations. 
Nonlinear interaction between acoustics and fluid mechanics is not considered herein; we refer to the 
early work of Lighthill for aerodynamically generated acoustic sources~\cite{lighthill1, lighthill2},
to~\cite{nonlinear} for a review on nonlinear acoustics, and to~\cite{utzmann} for
the coupling of Computational Aero Acoustic (CAA) and Computational Fluid Dynamics (CFD) solvers.
Moreover, we assume that the flow is potential close to the scattering object and uniform far away from it.
This geometric setup leads to a partition of the unbounded medium of propagation into two subdomains, the bounded
interior domain near the scattering object where the flow is non-uniform and the unbounded exterior domain far away from
the object where the flow is uniform.
The main contribution of this work is the reformulation of the convected Helmholtz equation using the Prandtl--Glauert transformation
on the whole flow domain, yielding (i) the classical
Helmholtz equation in the exterior domain and (ii) an anisotropic diffusive PDE with skew-symmetric first-order perturbation in the
interior domain such that its transmission condition at the coupling boundary naturally fits the Neumann condition from the
classical Helmholtz equation. The Prandtl--Glauert transformation has been used in~\cite{DDMT} for
the uniformly convected Helmholtz equation. In the present case where the flow is non-uniform in the interior domain,
this reformulation allows us to use efficient off-the-shelf tools to perform a BEM-FEM coupling. Namely, a FEM is utilized in the
interior domain to discretize the anisotropic second-order PDE, a BEM is utilized for the classical Helmholtz equation in the
exterior domain, and Dirichlet-to-Neumann maps are used for the coupling.
We emphasize that the key advantage of using the Prandtl--Glauert transformation is that
the BEM part of the resolution only involves integral operators corresponding to the classical Helmholtz equation.
We consider two approaches for the coupling, leading,
to the authors' knowledge, to two novel coupled BEM-FEM formulations for the convected Helmholtz equation.
The first formulation involves one surface unknown and can be affected by resonant frequencies, while the second one
uses the stabilized CFIE technique from~\cite{buffa2, hiptmair},
avoids resonant frequencies, and involves two surface unknowns.
Our numerical results show that the first formulation yields results polluted by spurious oscillations in the 
close vicinity of resonant frequencies, whereas the second formulation yields consistent solutions at all frequencies.
This advantage of the second formulation is particularly relevant in practice at high frequencies, where the density of
resonant frequencies is higher.

We briefly discuss alternative methods from the literature to solve the convected Helmholtz equation in unbounded domains.
In some cases with simple geometries, the far-field solution is
analytically known~\cite{powles}. Boundary integral equations involving
the Green kernel associated with the convected Helmholtz equation have 
been derived in~\cite{jcp-biblio6}. Other numerical methods include
infinite finite elements~\cite{bettess, zienkiewicz} and the method of
fundamental 
solutions~\cite{fairweather}.
An alternative approach to treat unbounded domains is to use Perfectly
Matched Layers (PML), combined with a volumic resolution 
using, e.g., the FEM. Versions of PML for the convected Helmholtz equation
are considered in~\cite{jcp-biblio3, jcp-biblio5}. The use of PML allows one to consider unbounded domains of propagation, but
the solution is only available within the domain of computation. This can be a drawback in the following situations:
(i) when one is interested in the pressure field far away from the scattering object, or (ii) when scattering objects are located
far away from each other so that the volumic resolution has to be carried out in a very large area. Instead,
with coupled BEM-FEM, the volumic resolution only takes place in the areas where the flow is non-uniform, and the pressure
can be retrieved at any point of the exterior domain using known representation formulae, regardless of the distance of this point to the
scattering objects.
However, coupled BEM-FEM exhibit matrices with dense blocks for the unknowns on the boundary,
and an additional treatment is sometimes needed to avoid resonant
frequencies. These two points are addressed in this work.
%
%
%



The material is organized as follows:
off-the-shelf tools useful to carry out the coupling are recalled in Section~\ref{sec:basic_ing}. 
The Prandtl--Glauert transformation of the convected Helmholtz equation is derived in Section~\ref{sec:aeroacouspb}.
The coupled variational formulations are obtained in Section~\ref{sec:coupling}, and the most salient points in their mathematical
analysis are presented.
The finite-dimensional approximation of the coupled formulations is addressed in Section~\ref{sec:findim}, along with a
discussion on the structure of the linear systems and the algorithms to solve them effectively.
Finally, numerical results are presented in Section~\ref{sec:numres}, and some conclusions are drawn in
Section~\ref{sec:conclusion}.

\section{Classical tools for BEM-FEM coupling of the classical Helmholtz equation}
\label{sec:basic_ing}

In this section, the ingredients used to carry out the BEM-FEM coupling are recalled in the context of the
classical Helmholtz equation (so that the medium of propagation is at rest).

\subsection{Boundary integral operators}
\label{sec:derivofdtn}

\setlength\figureheight{0.43\textwidth}
\setlength\figurewidth{0.45\textwidth}
\begin{figure}[h!]
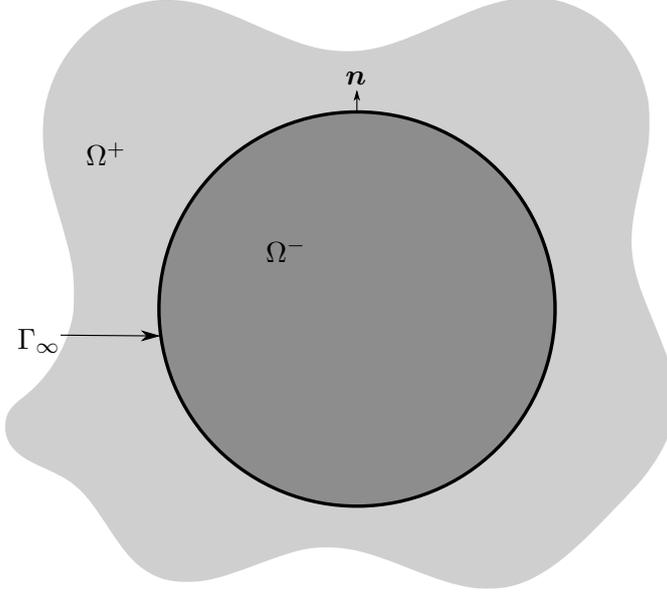

 \centering
\include{conventions0}
 \caption{Geometric setup for the classical Helmholtz equation}
\label{fig:geoconv0}
\end{figure}

Figure~\ref{fig:geoconv0} describes the geometric setup considered in
this section. Let $\Omega^{-}$ be a bounded open set with boundary
$\Gamma_\infty$, and set $\Omega^+:=\mathbb{R}^3\backslash\overline{\Omega^-}$.
The surface $\Gamma_\infty$ is assumed to be Lipschitz.
The one-sided Dirichlet traces on $\Gamma_\infty$ of a smooth function $u$ in $\Omega^+\cup\Omega^-$ are defined as
$\gamma_0^{\pm}u^\pm=u^{\pm}|_{\Gamma_\infty}$, and
the one-sided Neumann traces as $\gamma_1^{\pm}u^\pm=\left(\vec{\nabla}u^\pm\right)|_{\Gamma_\infty}\cdot\vec{n}$, where $u^\pm=u|_{\Omega^\pm}$
and where $\vec{n}$ is the unit normal vector to $\Gamma_\infty$ conventionally pointing towards $\Omega^+$.
These trace operators are extended to bounded linear operators
$\gamma_0^\pm:H^1(\Omega^{\pm})\rightarrow H^{\frac{1}{2}}(\Gamma_\infty)$ and $\gamma_1^\pm:H(\Delta, \Omega^{\pm})\rightarrow H^{-\frac{1}{2}}(\Gamma_\infty)$
, where $H^{\frac{1}{2}}(\Gamma_\infty)$ and $H^{-\frac{1}{2}}(\Gamma_\infty)$ are
the usual Sobolev spaces on $\Gamma_\infty$, and
$H(\Delta, \Omega^{\pm}):=\{v\in H^1(\Omega^{\pm}),\Delta v\in  L^2(\Omega^{\pm})\}$
(see~\cite[Lemma 20.2]{Tartar}).
It is actually sufficient to consider functional spaces on compact subsets of $\Omega^+$ to define exterior traces on $\Gamma_\infty$.
For $u\in H^1(\Omega^+\cup\Omega^-)$, the jump and average of its Dirichlet traces across $\Gamma_\infty$ are defined respectively as
$\left[\gamma_0 u\right]_{\Gamma_\infty}=\gamma_0^{+}u^+-\gamma_0^{-}u^-$ and
$\left\{\gamma_0 u\right\}_{\Gamma_\infty}=\frac{1}{2}\left(\gamma_0^{+}u^++\gamma_0^{-}u^-\right)$. 
For $u\in H(\Delta, \Omega^+\cup\Omega^-):=\{v\in H^{1}(\Omega^+\cup\Omega^-), \Delta v\in L^2(\Omega^+\cup\Omega^-)\}$,
the jump and average of its Neumann traces across $\Gamma_\infty$ are defined respectively as
$\left[\gamma_1 u\right]_{\Gamma_\infty}=\gamma_1^{+}u^+-\gamma_1^{-}u^-$ and
$\left\{\gamma_1 u\right\}_{\Gamma_\infty}=\frac{1}{2}\left(\gamma_1^{+}u^++\gamma_1^{-}u^-\right)$.
When a trace is single-valued on $\Gamma_\infty$, we omit the superscripts $\pm$.
Furthermore, the $L^2(\Gamma_\infty)$-inner product
$\langle\cdot,\cdot\rangle_{L^2(\Gamma_\infty), L^2(\Gamma_\infty)}:L^2(\Gamma_\infty)\times L^2(\Gamma_\infty)\rightarrow \mathbb{C}$ is defined as
$\langle\lambda,\mu\rangle_{L^2(\Gamma_\infty),
  L^2(\Gamma_\infty)}=\int_{\Gamma_\infty}\overline{\lambda}(\vec{y}){\mu}(\vec{y})ds(\vec{y})$,
where $\overline{\cdot}$ denotes the complex conjugate,
and is extended to a duality pairing on $H^{-\frac{1}{2}}(\Gamma_\infty)\times H^{\frac{1}{2}}(\Gamma_\infty)$
denoted by
$\langle\cdot,\cdot\rangle_{H^{-\frac{1}{2}}(\Gamma_\infty),H^{\frac{1}{2}}(\Gamma_\infty)}$. We
then define the product
\begin{equation}
\left(\lambda,\mu\right)_{\Gamma_\infty}=\left\{
\begin{alignedat}{2}
\langle\lambda,\mu\rangle_{H^{-\frac{1}{2}}(\Gamma_\infty),H^{\frac{1}{2}}(\Gamma_\infty)}&\qquad\textnormal{if }\lambda\in H^{-\frac{1}{2}}(\Gamma_\infty)&,&\quad\mu\in H^{\frac{1}{2}}(\Gamma_\infty),\\
\overline{\langle\mu,\lambda\rangle}_{H^{-\frac{1}{2}}(\Gamma_\infty),H^{\frac{1}{2}}(\Gamma_\infty)}&\qquad\textnormal{if }\lambda\in H^{\frac{1}{2}}(\Gamma_\infty)&,&\quad\mu\in H^{-\frac{1}{2}}(\Gamma_\infty).
\end{alignedat}
\right.
\end{equation}

Consider the following equations:
\begin{subequations}
\label{eq:rphs}
 \begin{align}
\Delta u + k^2 u &= 0\quad\textnormal{in }{\Omega^+\cup\Omega^-},\label{eq:rphs1}\\
\lim_{r\rightarrow +\infty} r\left( \frac{\partial{u}}{\partial{r}}-i k u \right)&=0,\label{eq:rphs2}
 \end{align}
\end{subequations}
where $k$ is the wave number.
A function solving~\eqref{eq:rphs1} is said to be a piecewise Helmholtz solution
and a function solving~\eqref{eq:rphs1}-\eqref{eq:rphs2} is said to be a radiating piecewise Helmholtz solution.
The condition at infinity~\eqref{eq:rphs2} is the Sommerfeld radiation condition,
that guarantees existence and uniqueness for Helmholtz exterior problems~\cite[Theorem 9.10]{mclean}.

For all $\lambda\in C^0(\Gamma_\infty)$, the single-layer potential is defined as $\mathcal{S}(\lambda)(\vec{x})=\int_{\Gamma_\infty}E(\vec{y}-\vec{x})
\lambda(\vec{y})ds(\vec{y})$, $\vec{x}\in\mathbb{R}^3\backslash\Gamma_\infty$, where $E(\vec{x})=\frac{\exp(i{k}\left|\vec{x}\right|)}{4\pi \left|\vec{x}\right|}$
is the fundamental solution of~\eqref{eq:rphs1}-\eqref{eq:rphs2}. For all $\mu\in C^0(\Gamma_\infty)$, the double-layer potential is defined as $\mathcal{D}(\mu)
(\vec{x})=\int_{\Gamma_\infty}\vec\nabla_{\vec{y}} E(\vec{y}-\vec{x})\mu(\vec{y})ds(\vec{y})$, $\vec{x}\in\mathbb{R}^3\backslash\Gamma_\infty$.
From~\cite[Theorem 3.1.16]{sauter}, these operators can be extended to bounded linear operators
$\mathcal{S}:H^{-\frac{1}{2}}(\Gamma_\infty)\rightarrow H^1_{\rm loc}(\mathbb{R}^3)$
and $\mathcal{D}:H^{\frac{1}{2}}(\Gamma_\infty)\rightarrow H^1_{\rm loc}(\mathbb{R}^3\backslash\Gamma_\infty)$
where, for any open set $X$, $H^{1}_{\rm loc}(X)=\{u\in H^{1}(K),\forall K\subset X\textnormal{ compact}\}$.
Moreover, both operators map onto radiating piecewise Helmholtz solutions.
Recalling~\cite[Theorem 3.1.1]{Nedelec}, a radiating piecewise Helmholtz solution $u$ can be represented from its Dirichlet and
Neumann jumps across $\Gamma_\infty$ in the form
\begin{equation}
\label{eq:reptheo}
u=-\mathcal{S}([\gamma_1 u]_{\Gamma_\infty})+\mathcal{D}([\gamma_0 u]_{\Gamma_\infty})\quad\textnormal{in }\Omega^+\cup\Omega^-.
\end{equation}
The operators
\begin{equation}
\label{eq:intboundop}
\begin{alignedat}{2}
S&:H^{-\frac{1}{2}}(\Gamma_\infty)\rightarrow H^{\frac{1}{2}}(\Gamma_\infty),&\quad S\lambda&=\gamma_0\left(\mathcal{S}\lambda\right),\\
D&:H^{\frac{1}{2}}(\Gamma_\infty)\rightarrow H^{\frac{1}{2}}(\Gamma_\infty),&\quad D\mu&=\left\{\gamma_0\left(\mathcal{D}\mu\right)\right\}_{\Gamma_\infty},\\
\tilde{D}&:H^{-\frac{1}{2}}(\Gamma_\infty)\rightarrow H^{-\frac{1}{2}}(\Gamma_\infty),&\quad \tilde{D}\lambda&=\left\{\gamma_1\left(\mathcal{S}\lambda\right)\right\}_{\Gamma_\infty},\\
N&:H^{\frac{1}{2}}(\Gamma_\infty)\rightarrow H^{-\frac{1}{2}}(\Gamma_\infty),&\quad N\mu&=-\gamma_1\left(\mathcal{D}\mu\right),
\end{alignedat}
\end{equation}
are respectively the single-layer, double-layer, transpose (or dual) of the double-layer, and hypersingular boundary integral operators.
The Dirichlet and Neumann traces are well-defined, and the functional setting can be found in~\cite[Theorem 7.1]{mclean}.
From~\cite[Theorem 3.1.2]{Nedelec}, if $u$ is a radiating piecewise Helmholtz solution, there holds
\begin{equation}
\label{eq:cald}
\begin{pmatrix}
\begin{array}{cc}
\frac{1}{2}I-D& S\\
 N& \frac{1}{2}I+\tilde{D}
\end{array}\end{pmatrix}\begin{pmatrix}
\begin{array}{cc}
[{\gamma_0}u]_{\Gamma_\infty}\\
[{\gamma_1}u]_{\Gamma_\infty}
\end{array}\end{pmatrix}=-\begin{pmatrix}
\begin{array}{cc}
\gamma_0^- u^-\\
\gamma_1^- u^-
\end{array}
\end{pmatrix}.
\end{equation}

\subsection{Transmission problems}

Consider the following transmission problem:
\begin{subequations}
\label{eq:transmipb}
 \begin{align}
F(u)&=0\quad\textnormal{in }{\Omega^-},\label{eq:transmipb1}\\
\Delta (u-u_{\rm inc}) + k^2 (u-u_{\rm inc}) &= 0\quad\textnormal{in }{\Omega^+},\label{eq:transmipb2}\\
{\gamma_0^+}u^+-{\gamma_0^-}u^-&=0\quad\textnormal{on }\Gamma_\infty,\label{eq:transmipb4}\\
{\gamma_1^+}u^+-{\gamma_1^-}u^-&=0\quad\textnormal{on }\Gamma_\infty,\label{eq:transmipb5}\\
\lim_{r\rightarrow +\infty} r\left( \frac{\partial{(u-u_{\rm inc})}}{\partial{r}}-i {k} (u-u_{\rm inc}) \right)&=0,\label{eq:transmipb6}
 \end{align}
\end{subequations}
where $F$ denotes some differential operator and $u_{\rm inc}$ is an incident acoustic field.
The field $u_{\rm inc}$ is created by a source located in~$\Omega^+$, and solves the classical Helmholtz equation outside
the support of this source. In particular,
\begin{equation}
\label{eq:finc0}
\Delta u_{\rm inc} + k^2 u_{\rm inc} = 0\quad\textnormal{in }{\Omega^-}.
\end{equation}
Let now $u$ solve~\eqref{eq:transmipb} and let $v$ be the function defined by $v|_{\Omega^+}=u-u_{\rm inc}$ and
$v|_{{\Omega^-}}=-u_{\rm inc}$. The function $v$ is a radiating piecewise Helmholtz solution
(this follows from~\eqref{eq:transmipb2} and~\eqref{eq:transmipb6} on $\Omega^+$, and from~\eqref{eq:finc0} on ${\Omega^-}$).
Moreover, since the field $u_{\rm inc}$ is continuous across $\Gamma_\infty$,
\begin{equation}
\left[\gamma_0v\right]_{\Gamma_\infty}=\gamma_0^+(u-u_{\rm inc})^++\gamma_0^-u^-_{\rm inc}=
\gamma_0^+(u-u_{\rm inc})^++\gamma_0^+u^+_{\rm inc}={\gamma_0^+}u^+.
\end{equation}
Likewise, $\left[\gamma_1v\right]_{\Gamma_\infty}={\gamma_1^+}u^+$.
Then,~\eqref{eq:cald} applied to $v$ yields
\begin{equation}
\label{eq:DtNsys}
\begin{pmatrix}
\begin{array}{cc}
\frac{1}{2}I-D&S\\
N&\frac{1}{2}I+\tilde{D}
\end{array}\end{pmatrix}\begin{pmatrix}
\begin{array}{cc}
{\gamma_0}u\\
{\gamma_1}u
\end{array}\end{pmatrix}=\begin{pmatrix}
\begin{array}{cc}
\gamma_0u_{\rm inc}\\
\gamma_1u_{\rm inc}
\end{array}
\end{pmatrix}.
\end{equation}
A $DtN$ operator maps any function $\theta$ to the Neumann trace $\gamma_1 u$ where $u$ solves
the exterior Helmholtz problem, with $\gamma_0 u=\theta$ as Dirichlet boundary condition on $\Gamma_\infty$.
Various $DtN$ maps can be derived from~\eqref{eq:DtNsys}.
Two examples are detailed in Sections~\ref{sec:unstabdtn} and~\ref{sec:stabdtn} below.



\subsection{An unstable DtN map}
\label{sec:unstabdtn}

Using the first line of~\eqref{eq:DtNsys}, $\gamma_1u=S^{-1}\left(\left(D-\frac{1}{2}I\right)\left(\gamma_0u\right)
+\gamma_0u_{\rm inc}\right)$. At this point, the inverse of $S$ is written formally. Conditions of inversibility are discussed below.
From the second line of~\eqref{eq:DtNsys}, $\gamma_1u=-N(\gamma_0u)+\left(\frac{1}{2}I-{\tilde{D}}\right)
\left(\gamma_1u\right)+\gamma_1u_{\rm inc}$. Injecting into the right-hand side of this relation the expression of
$\gamma_1u$ derived above yields the $DtN$ affine map:
$DtN_{\rm unstab}:H^{\frac{1}{2}}(\Gamma_\infty)\rightarrow H^{-\frac{1}{2}}(\Gamma_\infty)$ such that
\begin{equation}
\label{eq:DtNunst}
DtN_{\rm unstab}(\gamma_0u)=-N(\gamma_0u)+\left(\frac{1}{2}I-{\tilde{D}}\right)(\lambda)+\gamma_1u_{\rm inc},
\end{equation}
where the auxiliary field $\lambda\in H^{-\frac{1}{2}}(\Gamma_\infty)$ is such that
\begin{equation}
\label{eq:deflam}
\left(D-\frac{1}{2}I\right)(\gamma_0u) - S\lambda = -{\gamma_0}u_{\rm inc}.
\end{equation}

The main difficulty with the map~\eqref{eq:DtNunst} stems from the fact that $\ker(S)$ depends on
whether $-k^2$ belongs to the set $\Lambda$ of Dirichlet eigenvalues for the Laplacian on the bounded domain
$\Omega^-$. Specifically, $\ker(S)=\left\{0\right\}$ if $-k^2\notin\Lambda$, while
$\ker(S)$ contains nontrivial elements if $-k^2\in\Lambda$.

\begin{remark}
The $DtN_{\rm unstab}$ affine map was proposed by Costabel to obtain a symmetric coupling in the case of self-adjoint operators
\cite{Costabel}. The $DtN_{\rm unstab}$ map can be well-defined for certain operators, for instance for transmission
problems for the Laplace equation, the unstability being here linked to the specificity of the Helmholtz equation.
\end{remark}

\subsection{A stable DtN map}
\label{sec:stabdtn}

The idea of considering a linear combination of $S$ and $\frac{1}{2}I+\tilde{D}$ to derive well-posed boundary integral equations
was independently proposed by Brakhage and Werner~\cite{BW}, Leis~\cite{Leis} and Panich~\cite{Panich}.
This is the so-called Combined Field Integral Equation (CFIE).
However, $S$ and $\tilde{D}$ map $H^{-\frac{1}{2}}(\Gamma_\infty)$ into different spaces ($H^{\frac{1}{2}}(\Gamma_\infty)$ and
$H^{-\frac{1}{2}}(\Gamma_\infty)$ respectively).
This inconsistency in the functional setting can be solved
by considering a regularizing compact operator from $H^{-\frac{1}{2}}(\Gamma_\infty)$ into $H^{\frac{1}{2}}(\Gamma_\infty)$,
as observed by Buffa and Hiptmair~\cite{buffa2}.

We briefly recall the approach of~\cite{buffa2}.
Let $\vec{\nabla}_{\Gamma_\infty}$ denote the surfacic gradient on $\Gamma_\infty$.
Consider the following Hermitian sesquilinear form: For all $p,q\in H^{1}(\Gamma_\infty)$,
\begin{equation}
\label{eq:defc}
\delta_{\Gamma_\infty}(p,q)=\left(\vec{\nabla}_{\Gamma_\infty}p, \vec{\nabla}_{\Gamma_\infty}q\right)_{\Gamma_\infty}+
\left(p,q\right)_{\Gamma_\infty},
\end{equation}
and the regularizing operator $M:H^{-1}(\Gamma_\infty)\rightarrow H^{1}(\Gamma_\infty)$ defined through the following implicit
relation: For all $p\in H^1(\Gamma_\infty)$, 
$\delta_{\Gamma_\infty}(Mp,q)=(p,q)_{\Gamma_\infty}$ for all $q\in H^1(\Gamma_\infty)$.
It is readily seen that $M=(-\Delta_{\Gamma_\infty}+I)^{-1}$, where
$\Delta_{\Gamma_\infty}$ is the Laplace--Beltrami operator on $\Gamma_\infty$.
Many choices of $DtN$ maps based on CFIE strategies with the regularizing operator $M$ lead to well-posed systems whatever
the value of $k$. 
The present choice hinges on the inversion of the operator $S+i\eta M\left(\frac{1}{2}I+{\tilde{D}}\right)$ mapping
$H^{-\frac{1}{2}}(\Gamma_\infty)$ into $H^{\frac{1}{2}}(\Gamma_\infty)$ since,
from~\cite[Lemma 4.1]{buffa2}, this operator is bijective as long as the coupling parameter $\eta$ is such that
${\rm Re}(\eta)\neq 0$.
To do so, the first line of~\eqref{eq:DtNsys} and the application of $M$ to the second line of~\eqref{eq:DtNsys} are used to
obtain
\begin{equation}
\label{eq:DtNsys3}
\begin{pmatrix}
\begin{array}{cc}
\left(\frac{1}{2}I-D\right)+i\eta MN&S+i\eta M\left(\frac{1}{2}I+\tilde{D}\right)\\
N&\frac{1}{2}I+\tilde{D}
\end{array}\end{pmatrix}\begin{pmatrix}
\begin{array}{cc}
\gamma_0u\\
\gamma_1u
\end{array}\end{pmatrix}=\begin{pmatrix}
\begin{array}{cc}
\gamma_0u_{\rm inc}+i\eta M\gamma_1u_{\rm inc}\\
\gamma_1u_{\rm inc}
\end{array}
\end{pmatrix}.
\end{equation}
Then, using both equations in~\eqref{eq:DtNsys3} in the same fashion as in Section~\ref{sec:unstabdtn} leads to
$DtN_{\rm stab}: H^{\frac{1}{2}}(\Gamma_\infty)\rightarrow H^{-\frac{1}{2}}(\Gamma_\infty)$ such that
\begin{equation}
\label{eq:dtnst}
\dsp DtN_{\rm stab}(\gamma_0u)= -N(\gamma_0u)+\left(\frac{1}{2}I-{\tilde{D}}\right)(\lambda)+\gamma_1u_{\rm inc},
\end{equation}
where $\lambda\in H^{-\frac{1}{2}}(\Gamma_\infty)$ is such that
\begin{equation}
\label{eq:lambdast}
S(\lambda)+\left(\frac{1}{2}I-D\right)(\gamma_0u)+i\eta p=\gamma_0u_{\rm inc},
\end{equation}
and $p\in H^{1}(\Gamma_\infty)$ is such that for all $q\in H^1(\Gamma_\infty)$,
\begin{equation}
\label{eq:pst}
\delta_{\Gamma_\infty}(p,q)=\left(N(\gamma_0u),q\right)_{\Gamma_\infty}+\left(\left(\frac{1}{2}I+{\tilde{D}}\right)(\lambda),q\right)_{\Gamma_\infty}-\left(\gamma_1u_{\rm inc},q\right)_{\Gamma_\infty}.
\end{equation}
\section{The aeroacoustic problem}
\label{sec:aeroacouspb}

This section describes the problem of acoustic scattering by a solid object in a non-uniform convective flow,
together with the underlying physical assumptions.

\subsection{Notation and preliminaries}

\setlength\figureheight{0.43\textwidth}
\setlength\figurewidth{0.45\textwidth}
\begin{figure}[h!]
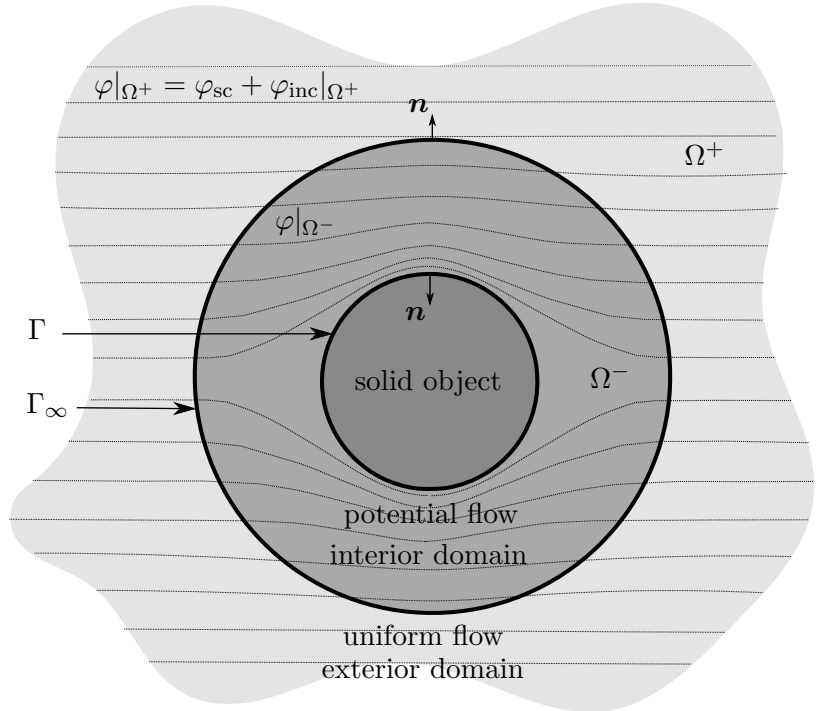

 \centering
\include{conventions2}
 \caption{Geometric setup for the convected Helmholtz equation}
\label{fig:geoconv}
\end{figure}

Figure~\ref{fig:geoconv} describes the geometric setup. The interior domain, corresponding
to the area near the scattering object where the convective flow is non-uniform, is denoted by $\Omega^{-}$.
In the exterior domain, $\Omega^{+}$, the convective flow is assumed to be uniform.  
The complete medium of propagation, denoted by $\Omega\subset\mathbb{R}^3$, is such that
$\Omega=\Omega^{+}\cup\Omega^{-}\cup\Gamma_\infty=\mathbb{R}^3\backslash\overline{\{\textnormal{solid object}\}}$, where
$\Gamma_\infty=\partial{\Omega^+}\cap\partial{\Omega^-}$ is the boundary between the interior and exterior domains.
The surface $\Gamma_\infty$ is assumed to be Lipschitz. Such an assumption is sufficiently large to include for instance
polyhedric surfaces resulting from the use of a finite element mesh in $\Omega^-$.
The surface of the solid scattering object, denoted by
$\Gamma=\partial{\Omega^-}\backslash\Gamma_\infty$, is assumed to be
Lipschitz. The source term $g$ is time-harmonic with pulsation $\omega$ and is assumed to be located in $\Omega^+$. Typically, this source
term can be an acoustic monopole located at $x_s\in\Omega^+$ of amplitude $A_s$, so that $g=A_s\delta_{x_s}$, where
$\delta_{x_s}$ denotes the Dirac mass distribution at $x_s$.

The speed of sound when the medium of propagation is at rest is denoted by $c$, the wave number by $k$, the density by $\rho$,
and the acoustic velocity and pressure, respectively, by $\vec{v}$ and $p$. The rescaled velocity is defined as
$\vec{M}=c^{-1}\vec{v}$, where $M=\left|\vec{M}\right|$ is the Mach number.
The subscript $\infty$ refers to uniform flow quantities related to $\Omega^+$, whereas the subscript $0$ refers to
point-dependent flow quantities related to $\Omega^-$, that is,
$\rho_{|\Omega^-}=\rho_0(\vec{x})$, $\rho_{|\Omega^+}\equiv\rho_\infty$, 
$k_{|\Omega^-}=k_0(\vec{x})$, $k_{|\Omega^+}\equiv k_\infty$,
$c_{|\Omega^-}=c_0(\vec{x})$, $c_{|\Omega^+}\equiv c_\infty$,
$\vec{M}_{|\Omega^-}=\vec{M}_0(\vec{x})$, and $\vec{M}_{|\Omega^+}\equiv\vec{M}_\infty$.
The convective flow is continuous through $\Gamma_\infty$ and tangential on $\Gamma$.
Hence $\rho$, $k$ and $\vec{M}$ are continuous through $\Gamma_\infty$, and $\vec{M}\cdot\vec{n}=0$ on $\Gamma$.

The physical quantities are associated with complex quantities with the following convention on, for instance, the acoustic pressure:
$p\leftrightarrow{\rm Re}\left(p \exp\left(-i\omega t\right)\right)$. In what follows, we always refer to the complex
quantity. Furthermore, the Hermitian product of two
vectors $\vec{U},\vec{W}\in\mathbb{C}^3$ is denoted by $\overline{\vec{U}}\cdot \vec{W}=\sum_{i=1}^3\overline{U_i}W_i$,
and the associated Euclidian norm in $\mathbb{C}^3$ is denoted by
$\|\cdot\|$.

\subsection{The convected Helmholtz equation}

In the interior domain $\Omega^-$, the convective flow is supposed to be stationary, inviscid, isentropic, potential and subsonic.
The acoustic effects are considered to be a first-order perturbation of this flow. With these assumptions, there exists an acoustic potential $\varphi$ such that $\vec{v}=\vec{\nabla}{\varphi}$.

Following~\cite[Equation (F27)]{introacous} and~\cite{Goldstein}, and making use of the acoustic potential,
the linearization of the Euler equations leads to
\begin{equation}
\rho\left(k^2\varphi+i k \vec{M}\cdot\vec{\nabla}{\varphi}\right)+\vec{\nabla}\cdot\left(\rho\left(\vec{\nabla}{\varphi}
-\left(\vec{M}\cdot\vec{\nabla}{\varphi}\right)\vec{M}+i k \varphi \vec{M}\right)\right)=g\textnormal{~~~in }\Omega,
\label{eq:pb_int}
\end{equation}
where $\varphi$ is the unknown acoustic potential, and $\rho$, $k$, $\vec{M}$, and $g$ are known.
Equation~\eqref{eq:pb_int} is the convected Helmholtz equation.
Under the assumption that the acoustic perturbations are perfectly reflected by the solid object,
the acoustic potential verifies an homogeneous Neumann boundary condition on $\Gamma$:
\begin{equation}
\label{eq:boundcond}
\vec\nabla\varphi\cdot\vec{n}=0\quad\textnormal{on }\Gamma.
\end{equation}
Problem~\eqref{eq:pb_int}-\eqref{eq:boundcond} is completed by a Sommerfeld-like boundary condition at infinity.

In the exterior domain $\Omega^+$ where the flow quantities are uniform, equation~\eqref{eq:pb_int} becomes
\begin{equation}
\label{eq:extpot}
\Delta\varphi+k_{\infty}^2\varphi+2ik_{\infty}\vec{M}_\infty\cdot\vec{\nabla}\varphi
-\vec{M}_\infty\cdot\vec{\nabla}\left(\vec{M}_\infty\cdot\vec{\nabla}\varphi \right)=g\textnormal{~~~in }\Omega^+.
\end{equation}
If there were no scattering object and if the convective flow
were uniform in $\mathbb{R}^3$ (and thus equal to the flow at infinity), the source term $g$
would create an acoustic potential denoted by $\varphi_{\rm inc}$ in $\mathbb{R}^3$.
This potential, which solves~\eqref{eq:extpot} in $\mathbb{R}^3$, has an analytical expression, and $\varphi_{\rm inc}$ and $\vec{n}\cdot\vec\nabla\varphi_{\rm inc}$ are continuous
across $\Gamma_\infty$.
The acoustic potential scattered by the solid object is defined as $\varphi_{\rm sc}=\varphi-\varphi_{\rm inc}$ in $\Omega^+$. 
Eliminating the known acoustic potential $\varphi_{\rm inc}$ created by the source yields
\begin{equation}
\label{eq:aeroacous_unif}
\Delta\varphi_{\rm sc}+k_{\infty}^2\varphi_{\rm sc}+2ik_{\infty}\vec{M}_\infty\cdot\vec{\nabla}\varphi_{\rm sc}
-\vec{M}_\infty\cdot\vec{\nabla}\left(\vec{M}_\infty\cdot\vec{\nabla}\varphi_{\rm sc} \right)=0\textnormal{~~~in }\Omega^+.
\end{equation}




\subsection{The Prandtl--Glauert transformation}
\label{sec:lorentz}

The Prandtl--Glauert transformation was introduced by Glauert~\cite{Glauert} to study the
compressible effects of the air on the lift of an airfoil and was applied to subsonic aeroacoustic problems by Amiet
and Sears~\cite{Amiet}.
Herein, the Prandtl--Glauert transformation is applied in the complete medium of propagation and is based on the reduced
velocity $\vec{M}_\infty$. This transformation consists in changing the space and time variables as
\begin{equation}
\left\{
 \begin{aligned}
\vec{x}'&=\gamma_\infty\left(\hat{\vec{M}}_\infty\cdot\vec{x}\right)\hat{\vec{M}}_\infty+\left(
\vec{x}-(\hat{\vec{M}}_\infty\cdot\vec{x})\hat{\vec{M}}_\infty\right)&\quad \vec{x}\in\Omega,\\
  t'&=t-\frac{\gamma_\infty^2}{c_\infty}\vec{M}_\infty\cdot\vec{x}&\quad t\in\mathbb{R},
 \end{aligned}
\right.
\end{equation}
where $\gamma_\infty=\frac{1}{\sqrt{1-M_{\infty}^2}}$ and $\hat{\vec{M}}_\infty=M_\infty^{-1}{\vec{M}_\infty}$ with
$M_\infty=|\vec{M}_\infty|$. The spatial transformation corresponds to a dilatation along $\hat{\vec{M}}_\infty$ of
magnitude $\gamma_\infty$, the component orthogonal to $\hat{\vec{M}}_\infty$ being unchanged.
In what follows, we suppose that the flow is subsonic
  everywhere, so that $M_\infty < 1$. Under this property,
the Prandtl--Glauert transformation is a $\mathcal{C}^{\infty}$-diffeomorphism from $\Omega\times\mathbb{R}$ to $\Omega'\times\mathbb{R}$,
where $\Omega'$ denotes the transformed medium of propagation.

\subsection{The transformed problem}
\label{sec:transformed_pb}


To apply the Prandtl--Glauert transformation to a PDE in the frequency domain, one has to change the differential operators as
\begin{equation}
\label{eq:precomp}
\vec\nabla u=\mathcal{N}\vec\nabla' u,\qquad
\vec\nabla \cdot\vec{U}=\vec\nabla' \cdot\mathcal{N}\vec{U},
\end{equation}
for a scalar-valued function $u$ and a vector-valued function $\vec{U}$. Here,
$\mathcal{N}=I+C_\infty\vec{M}_\infty\vec{M}^T_\infty$ with $C_\infty = \frac{\gamma_\infty-1}{M_{\infty}^2}$ and
$\vec{\nabla'}$ refers to derivatives with respect to the transformed variables $\vec{x}'$.
Moreover, it is readily verified that
\begin{equation}
\label{eq:precomp2}
\mathcal{N}\vec{M} =\vec{M}+C_\infty P\vec{M}_\infty,\qquad
\mathcal{N}\vec{M}_\infty =\gamma_\infty\vec{M}_\infty,\qquad
\mathcal{N}\vec{M}\cdot\vec{M}_\infty =\mathcal{N}\vec{M}_\infty\cdot\vec{M}=\gamma_\infty P,
\end{equation}
where $P = \vec{M}\cdot\vec{M}_\infty$.
Dividing equation~\eqref{eq:pb_int} by $\rho_\infty$ and applying~\eqref{eq:precomp} leads to
\begin{equation*}
rk^2\varphi+irk\vec{M}\cdot\mathcal{N}\vec\nabla'{\varphi}+\vec{\nabla}'\cdot\left(r\mathcal{N}\mathcal{N}\vec\nabla'{\varphi}\right)
-\vec{\nabla}'\cdot\left(r\left(\vec{M}\cdot\mathcal{N}\vec\nabla'{\varphi}\right)\mathcal{N}\vec{M}\right)+\vec{\nabla}'\cdot\left(irk \varphi \mathcal{N}\vec{M}\right)={\rho_\infty^{-1}}g,
\end{equation*}
where  $r = \frac{\rho}{\rho_\infty}$.
Let $f$ be such that $\varphi (\vec{x})=\alpha(\vec{x}')f(\vec{x}')$ with $\alpha(\vec{x}') = \exp\left(-i k_{\infty}\gamma_\infty\left(\vec{M}_\infty\cdot\vec{x}'\right)\right)$,
$\vec{x}'\in\Omega'$; $f_{\rm inc}$ and $f_{\rm sc}$ are defined from $\varphi_{\rm inc}$
and $\varphi_{\rm sc}$ in the same fashion, so that $f_{\rm inc}$ is analytically known, and defined in $\mathbb{R}^3$.
Expanding the derivatives with respect to $\alpha$
and using~\eqref{eq:precomp2} leads to
\begin{equation*}
\begin{aligned}
&\alpha rk^2 f+\alpha irk\vec{M}\cdot\mathcal{N}\vec\nabla'{f}+\alpha rk^2qPf+
\vec{\nabla}'\cdot\left(\alpha r\mathcal{N}\mathcal{N}\vec\nabla'{f}\right)
-\vec{\nabla}'\cdot\left(\alpha irk \gamma_\infty qf\vec{M}_\infty\right)\\
&\quad -\vec{\nabla}'\cdot\left(\alpha r\left(\vec{M}\cdot\mathcal{N}\vec\nabla'{f}\right)\mathcal{N}\vec{M}\right)
+\vec{\nabla}'\cdot\left(\alpha irk(1+qP)f\mathcal{N}\vec{M}\right)={\rho_\infty^{-1}}g,
\end{aligned}
\end{equation*}
where $q = \gamma_\infty^2\frac{k_\infty}{k}$ and where we used the fact that
$\vec\nabla'\alpha=-\alpha ik_\infty \gamma_\infty\vec{M_\infty}$.
Expanding again the derivatives with respect to $\alpha$
and using again~\eqref{eq:precomp2} as well as the symmetry of
$\mathcal{N}$, we infer after some calculations that
\begin{equation}
\label{eq:equa_init0}
r k^2 \beta f+i r k \vec{V}\cdot\vec{\nabla}'{f}
+\vec{\nabla}'\cdot\left(i r k f\vec{V} + r\Xi\vec{\nabla}'{f}\right)=\varsigma\quad\textnormal{in }{\Omega},
\end{equation}
where $\beta = \left(1+qP\right)^2-q^2M_\infty^2$,
$\vec{V} = \left(1+qP\right)\mathcal{N}\vec{M} -q\gamma_\infty\vec{M}_\infty$,
$\Xi = \mathcal{N}\mathcal{O}\mathcal{N}$ with $\mathcal{O} = I-\vec{M}\vec{M}^T$, and
$\varsigma(\vec{x}')={\rho_\infty^{-1}}\alpha^{-1}(\vec{x}')g(\vec{x}')$.

%
Consider now the boundary condition~\eqref{eq:boundcond}.
The normals on the initial geometry are denoted by $\vec{n}$, and the normals on the transformed geometry by $\vec{n}'$.
It is readily seen that
\begin{equation}
\label{eq:nprime_fonction_n}
\begin{aligned}
\vec{n}&=K_\infty\mathcal{N}\vec{n}',
\end{aligned}
\end{equation}
where $K_\infty$ is a normalization factor that is not needed in what follows.
Owing to~\eqref{eq:precomp} and~\eqref{eq:nprime_fonction_n},~\eqref{eq:boundcond} becomes
$\mathcal{N}\vec\nabla'\varphi\cdot\mathcal{N}\vec{n}'=0$. Hence,
$\mathcal{N}\vec\nabla'(\alpha f)\cdot\mathcal{N}\vec{n}'=0$, leading to
$\left(\mathcal{N}\vec\nabla'f-ik_\infty \gamma_\infty f\mathcal{N}\vec{M}_\infty\right)\cdot\mathcal{N}\vec{n}'=0$.
Since the flow is tangential on $\Gamma$, $\vec{M}\cdot\vec{n}=0$ on $\Gamma$. Hence, $\vec{M}\cdot\mathcal{N}\vec{n}'=0$
on $\Gamma'$, where $\Gamma'$ denotes the transformed boundary $\Gamma$, so that
\begin{equation}
\begin{aligned}
\label{eq:demo3}
\left(\mathcal{N}\vec\nabla'f-\left(\mathcal{N}\vec{M}\cdot\vec\nabla'f\right)\vec{M}
+ikf\left((1+qP)\vec{M}-\frac{k_\infty}{k}\gamma_\infty\mathcal{N}\vec{M}_\infty\right)\right)\cdot\mathcal{N}\vec{n}'=0.
\end{aligned}
\end{equation}
Using the symmetry of $\mathcal{N}$ and~\eqref{eq:precomp2},~\eqref{eq:demo3} leads to
\begin{equation}
\label{eq:bound}
\left(i r k f\vec{V}+r\Xi\vec{\nabla}'{f}\right)\cdot\vec{n}'=0\quad\textnormal{on }\Gamma'.
\end{equation}
Notice that although the term $\vec{M}\cdot\mathcal{N}\vec{n}'$
vanishes, it has been added to the expression of the boundary condition in order to obtain in~\eqref{eq:bound}
the normal component of the vector-valued function in
the divergence term of~\eqref{eq:equa_init0}. This point is crucial to derive coupled formulations.

In what follows, primes are omitted for brevity, and the transformed geometry, unknowns and operators are considered.
In summary, the transformed convected Helmholtz equation
together with the boundary condition and radiation condition at infinity
takes the form
\begin{subequations}
\label{eq:equa_init}
 \begin{align}
r k^2 \beta f+i r k \vec{V}\cdot\vec{\nabla}{f}
+\vec{\nabla}\cdot\left(i r k f\vec{V}+r\Xi\vec{\nabla}{f}\right)&=\varsigma\quad\textnormal{in }{\Omega},\label{eq:equa_init1}\\
\left(i r k f\vec{V}+r\Xi\vec{\nabla}{f}\right)\cdot\vec{n}&=0\quad\textnormal{on }\Gamma,\label{eq:equa_init2}\\
\lim_{r\rightarrow +\infty} r\left( \frac{\partial{(f-f_{\rm inc})}}{\partial{r}}-i \hat{k}_{\infty} (f-f_{\rm inc}) \right)&=0,\label{eq:equa_init3}
\end{align}
\end{subequations}
where $f$ is searched in $H^{1}_{\rm loc}(\Omega)$.
The Sommerfeld radiation condition~\eqref{eq:equa_init3} is written for the scattered potential, since some incident acoustic
potentials, e.g., plane waves, do not verify it.

\begin{proposition}
The matrix $\Xi$ is symmetric positive definite in $\Omega^-$ with
\begin{equation}
\label{eq:usefullcoercivity}
\overline{\vec{U}}\cdot\Xi
\vec{U}\geq\left(1-M_0^2\right)\left\|\vec{U}\right\|^2\quad \textnormal{ for all }\vec{U}\in\mathbb{C}^3,
\end{equation}
where $M_0$ is uniformly bounded away from $1$ since the convective flow is assumed to be subsonic.
Moreover, still in $\Omega^-$,
\begin{equation}
\overline{\vec{U}}\cdot\Xi\vec{W}\leq\frac{1+M_0^2}{1-M_\infty^2}\|\vec{U}\|\|\vec{W}\|\quad
\textnormal{ for all }\vec{U},\vec{W}\in\mathbb{C}^3.
\end{equation}
\end{proposition}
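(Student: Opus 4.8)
The plan is to diagonalize the problem by recalling that $\Xi = \mathcal{N}\mathcal{O}\mathcal{N}$ with $\mathcal{N}=I+C_\infty\vec{M}_\infty\vec{M}_\infty^T$ symmetric and $\mathcal{O}=I-\vec{M}\vec{M}^T$, both of which are simple rank-one perturbations of the identity. I would first establish the spectral picture of $\mathcal{O}$: since $\mathcal{O}$ is a real symmetric matrix, for any $\vec{U}\in\mathbb{C}^3$ one has $\overline{\vec{U}}\cdot\mathcal{O}\vec{U} = \|\vec{U}\|^2 - |\vec{M}\cdot\vec{U}|^2$, hence by Cauchy--Schwarz $(1-M_0^2)\|\vec{U}\|^2 \leq \overline{\vec{U}}\cdot\mathcal{O}\vec{U}\leq\|\vec{U}\|^2$ in $\Omega^-$. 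In particular $\mathcal{O}$ is symmetric positive definite since $M_0<1$ there, and it is also a contraction in the sense $\|\mathcal{O}\|\leq 1$.

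\emph{Lower bound.} For the coercivity estimate~\eqref{eq:usefullcoercivity}, I would write, using the symmetry of $\mathcal{N}$, that $\overline{\vec{U}}\cdot\Xi\vec{U} = \overline{\mathcal{N}\vec{U}}\cdot\mathcal{O}(\mathcal{N}\vec{U}) \geq (1-M_0^2)\|\mathcal{N}\vec{U}\|^2$. It then remains to show $\|\mathcal{N}\vec{U}\|\geq\|\vec{U}\|$, i.e.\ that the smallest eigenvalue of $\mathcal{N}$ is at least $1$. This is immediate from the structure of $\mathcal{N}$: on the orthogonal complement of $\vec{M}_\infty$ it acts as the identity, and along $\hat{\vec{M}}_\infty$ it has eigenvalue $1+C_\infty M_\infty^2 = 1 + (\gamma_\infty-1) = \gamma_\infty \geq 1$. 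Combining the two inequalities gives~\eqref{eq:usefullcoercivity}; symmetry and positive definiteness of $\Xi$ follow since $\Xi = (\mathcal{N})^T\mathcal{O}\,\mathcal{N}$ is a congruence of the symmetric positive definite matrix $\mathcal{O}$.

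\emph{Upper bound.} For the continuity estimate I would bound, via Cauchy--Schwarz in $\mathbb{C}^3$ and the operator norm, $|\overline{\vec{U}}\cdot\Xi\vec{W}| \leq \|\Xi\|\,\|\vec{U}\|\,\|\vec{W}\|$ with $\|\Xi\|\leq\|\mathcal{N}\|^2\|\mathcal{O}\| \leq \gamma_\infty^2\cdot 1$, since $\|\mathcal{N}\| = \gamma_\infty$ (its largest eigenvalue, from the computation above) and $\|\mathcal{O}\|\leq 1$. Noting $\gamma_\infty^2 = (1-M_\infty^2)^{-1}$, this already gives a bound with constant $(1-M_\infty^2)^{-1}$; to recover the sharper constant $\frac{1+M_0^2}{1-M_\infty^2}$ stated, I would instead keep track of the $\mathcal{O}$-factor more carefully, using $\|\mathcal{O}\|\leq 1 \leq 1+M_0^2$ trivially or, if an equality-type bound is wanted, the identity $\overline{\vec{U}}\cdot\Xi\vec{W} = \overline{\mathcal{N}\vec{U}}\cdot\mathcal{N}\vec{W} - (\overline{\vec{M}\cdot\mathcal{N}\vec{U}})(\vec{M}\cdot\mathcal{N}\vec{W})$ and estimating each term, $\|\mathcal{N}\vec{U}\|\|\mathcal{N}\vec{W}\| + M_0^2\|\mathcal{N}\vec{U}\|\|\mathcal{N}\vec{W}\| = (1+M_0^2)\|\mathcal{N}\vec{U}\|\|\mathcal{N}\vec{W}\| \leq (1+M_0^2)\gamma_\infty^2\|\vec{U}\|\|\vec{W}\|$.

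\emph{Main obstacle.} None of the steps is deep; the only point requiring a little care is pinning down the precise constants, in particular verifying that $\|\mathcal{N}\|=\gamma_\infty$ and $1\leq\lambda_{\min}(\mathcal{N})$ exactly (so that the bound $\|\mathcal{N}\vec{U}\|\geq\|\vec{U}\|$ used in the lower bound is not wasteful) and that one does not lose the factor $M_0^2$ in the upper bound by bounding $\|\mathcal{O}\|$ by $1$ too early. Everything reduces to the eigenvalues of two explicit rank-one perturbations of the identity together with the definitions $C_\infty=(\gamma_\infty-1)/M_\infty^2$ and $\gamma_\infty=(1-M_\infty^2)^{-1/2}$; the identity $1+C_\infty M_\infty^2=\gamma_\infty$ is the one algebraic fact doing the work.
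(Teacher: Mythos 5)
Your proof is correct. The paper actually states this proposition without giving any proof, and your argument is the natural one it implicitly relies on: the identity $\overline{\vec{U}}\cdot\Xi\vec{U}=\overline{(\mathcal{N}\vec{U})}\cdot\mathcal{O}(\mathcal{N}\vec{U})$ from the symmetry of $\mathcal{N}$, the spectrum $\{1,1,\gamma_\infty\}$ of $\mathcal{N}$ (so that $\|\vec{U}\|\leq\|\mathcal{N}\vec{U}\|\leq\gamma_\infty\|\vec{U}\|$, using $1+C_\infty M_\infty^2=\gamma_\infty$ and $\gamma_\infty^2=(1-M_\infty^2)^{-1}$), and the splitting $\mathcal{O}=I-\vec{M}\vec{M}^T$ giving the factor $1-M_0^2$ below and $1+M_0^2$ above. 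All constants check out, so there is nothing to correct.
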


An important observation is that
in $\Omega^+$, $\beta=\gamma_\infty^2$, $\vec{V}=\vec{0}$ and $\Xi=I$, so that~\eqref{eq:equa_init1}
becomes
\begin{equation}
\label{eq:eqext}
\Delta f+\hat{k}_\infty^2 f=\varsigma\quad\textnormal{in }\Omega^+,
\end{equation}
where
\begin{equation}
\label{eq:kinfty}
\hat{k}_\infty=\gamma_\infty{k}_\infty. 
\end{equation}
Moreover, since ${\rm supp}(\varsigma)\subset\Omega^+$, $f_{\rm inc}$ satisfies
\begin{equation}
\label{eq:finc}
\Delta f_{\rm inc}+\hat{k}_\infty^2 f_{\rm inc}=\varsigma\quad\textnormal{in }\Omega^+,
\qquad\Delta f_{\rm inc}+\hat{k}_\infty^2 f_{\rm inc}=0\quad\textnormal{in }\mathbb{R}^3\backslash\overline{\Omega^+}.
\end{equation}
Eliminating $f_{\rm inc}$ in~\eqref{eq:eqext} yields
\begin{equation}
\Delta f_{\rm sc}+\hat{k}_\infty^2 f_{\rm sc}=0\quad\textnormal{in }\Omega^+.
\end{equation}
This is the classical Helmholtz equation with modified wave number $\hat{k}_\infty$.



\section{Coupling procedure}
\label{sec:coupling}

The purpose of this section is to derive two BEM-FEM coupled formulations for problem~\eqref{eq:equa_init}
using the tools presented in Section~\ref{sec:basic_ing}
and to analyze their well-posedness.

\subsection{The transmission problem}


We consider the following transmission problem:
\begin{subequations}
\label{eq:pb_sep}
 \begin{align}
r k^2 \beta f^-+i r k \vec{V}\cdot\vec{\nabla}{f^-}
+\vec{\nabla}\cdot\left(i r k f\vec{V}+r\Xi\vec{\nabla}{f}\right)^-&=0\quad\textnormal{in }{\Omega^-},\label{eq:pb_sep1}\\
\Delta f_{\rm sc} + \hat{k}_{\infty}^2 f_{\rm sc} &= 0\quad\textnormal{in }{\Omega^+},\label{eq:pb_sep2}\\
\left(\left(i r k {f}\vec{V}+r\Xi\vec{\nabla}{f}\right)\cdot\vec{n}\right)^-&=0\quad\textnormal{on }\Gamma,\label{eq:pb_sep3}\\
{\gamma_0^+}f^+-{\gamma_0^-}f^-&=0\quad\textnormal{on }\Gamma_\infty,\label{eq:pb_sep4}\\
{\gamma_1^+}f^+-{\gamma_1^-}f^-&=0\quad\textnormal{on }\Gamma_\infty,\label{eq:pb_sep5}\\
\lim_{r\rightarrow +\infty} r\left( \frac{\partial{(f^+-f^+_{\rm inc})}}{\partial{r}}-i \hat{k}_{\infty} (f^+-f^+_{\rm inc}) \right)&=0.\label{eq:pb_sep6}
 \end{align}
\end{subequations}
It is easily seen that Problem~\eqref{eq:equa_init} is equivalent to Problem~\eqref{eq:pb_sep}.

\begin{proposition}
\label{theouni}
Problem~\eqref{eq:pb_sep} has at most one solution in $H^{1}_{\rm loc}(\Omega)$.
\end{proposition}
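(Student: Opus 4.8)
The plan is to argue by linearity: it suffices to show that if $f$ solves Problem~\eqref{eq:pb_sep} with $f_{\rm inc}\equiv 0$ (equivalently, with zero data and a genuine radiation condition on $f^+$ itself), then $f\equiv 0$. So let $f$ be such a solution, with $f^-\in H^1(\Omega^-)$ and $f^+\in H^1_{\rm loc}(\Omega^+)$ satisfying the homogeneous version of~\eqref{eq:pb_sep}.

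\emph{Step 1 (Exterior estimate).} On $\Omega^+$ the function $f^+$ is a radiating solution of the classical Helmholtz equation with wave number $\hat k_\infty$, in the exterior of the Lipschitz surface $\Gamma_\infty$. I would invoke the standard exterior uniqueness statement already cited in the paper (\cite[Theorem 9.10]{mclean}, in the form that controls a Helmholtz solution by its Cauchy data on $\Gamma_\infty$): if one shows that the interior and exterior traces match and that a suitable boundary bilinear quantity vanishes, then Rellich's lemma plus unique continuation force $f^+\equiv 0$ in $\Omega^+$, hence $\gamma_0^+f^+=0$ and $\gamma_1^+f^+=0$ on $\Gamma_\infty$. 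By the transmission conditions~\eqref{eq:pb_sep4}--\eqref{eq:pb_sep5} this gives $\gamma_0^-f^-=0$ and $\gamma_1^-f^-=0$ on $\Gamma_\infty$.

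\emph{Step 2 (Interior energy identity).} For the interior equation I would test~\eqref{eq:pb_sep1} against $\overline{f^-}$, integrate over $\Omega^-$, and integrate the divergence term by parts. The boundary terms split into a contribution on $\Gamma$ and one on $\Gamma_\infty$. On $\Gamma$ the conormal term $\left(irkf\vec V+r\Xi\vec\nabla f\right)\cdot\vec n$ vanishes by~\eqref{eq:pb_sep3}; on $\Gamma_\infty$ it vanishes once we have $\gamma_0^-f^-=\gamma_1^-f^-=0$ from Step~1. What remains is an identity of the form $\int_{\Omega^-}\left(rk^2\beta|f^-|^2 + irk(\vec V\cdot\vec\nabla f^-)\overline{f^-} - irk f^-(\vec V\cdot\vec\nabla\overline{f^-}) - r\,\overline{\vec\nabla f^-}\cdot\Xi\vec\nabla f^-\right)=0$. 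The point of the Prandtl--Glauert reformulation is exactly that the first-order terms combine into a \emph{purely imaginary} quantity: $irk\big[(\vec V\cdot\vec\nabla f^-)\overline{f^-} - f^-(\vec V\cdot\vec\nabla\overline{f^-})\big]$ is $2i\,{\rm Im}(\cdots)$ up to the real factor $rk$, so it contributes nothing to the real part (using that $r,k,\vec V,\beta$ are real-valued). Taking the real part of the identity and using Proposition~\ref{theouni}'s predecessor, namely the coercivity $\overline{\vec U}\cdot\Xi\vec U\ge(1-M_0^2)\|\vec U\|^2$ together with $r>0$, $k^2\beta>0$ (one should check $\beta>0$ in $\Omega^-$, which follows from subsonicity), forces $\vec\nabla f^-=0$ and $f^-=0$ a.e. in $\Omega^-$; alternatively one gets $\|\vec\nabla f^-\|_{L^2}\le C\|f^-\|_{L^2}$ and then a Poincaré/Friedrichs inequality (legitimate since $f^-$ has vanishing trace on the portion $\Gamma_\infty$ of $\partial\Omega^-$) closes the argument. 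Hence $f^-\equiv 0$, and combined with Step~1, $f\equiv 0$ on $\Omega$.

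\emph{Main obstacle.} The delicate point is Step~1: making rigorous the passage from "$f^+$ is a radiating exterior Helmholtz solution" to "$f^+\equiv 0$" requires knowing that the Cauchy data pairing $\langle\gamma_1^+f^+,\gamma_0^+f^+\rangle_{\Gamma_\infty}$ has the right sign, which in turn must come from the interior equation via the matched traces --- so Steps~1 and~2 are actually coupled and should be run together: one writes the full energy identity on $\Omega^-$ keeping the $\Gamma_\infty$ boundary term, writes the analogous Green identity on a large ball in $\Omega^+$, adds them using~\eqref{eq:pb_sep4}--\eqref{eq:pb_sep5} to cancel the $\Gamma_\infty$ terms, and takes the imaginary part to extract, via the Sommerfeld condition and Rellich's lemma, that $f^+$ vanishes outside a large ball and hence (unique continuation) everywhere in $\Omega^+$; then the real part of the interior identity, now with vanishing traces on all of $\partial\Omega^-$, gives $f^-\equiv 0$ as above. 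A secondary technical point worth a line is checking $\beta>0$ and that $\Xi$ is uniformly coercive on $\Omega^-$, both of which are consequences of the subsonic assumption $M_0,M_\infty<1$ stated in the paper.
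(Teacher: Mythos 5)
Your Step~1, in the corrected ``run Steps~1 and~2 together'' form you describe under \emph{Main obstacle}, is essentially the paper's argument for the exterior part: integrate the equation over $\Omega\cap B$ for a large ball $B\supset\Omega^-$, take the imaginary part of the resulting Green identity to get ${\rm Im}\left(\gamma_{1,\partial B}^- f,\gamma_{0,\partial B}^- f\right)_{\partial B}=0$, and apply Rellich's lemma to conclude $f\equiv 0$ outside $B$ (and hence, by analyticity of constant-coefficient Helmholtz solutions, in all of $\Omega^+$). Up to that point your proposal is sound.

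Step~2, however, contains a genuine gap. After the boundary terms vanish, the real part of your interior identity reads
\begin{equation*}
\int_{\Omega^-} r\,\overline{\vec\nabla f^-}\cdot\Xi\vec\nabla f^- \;=\; \int_{\Omega^-} rk^2\beta\,|f^-|^2 \;-\;2\int_{\Omega^-} rk\,\vec V\cdot{\rm Im}\bigl(f^-\vec\nabla\overline{f^-}\bigr),
\end{equation*}
in which the gradient term and the zeroth-order term enter with \emph{opposite} signs. This is not a sum of nonnegative quantities equal to zero, so neither the coercivity of $\Xi$ nor $\beta>0$ forces $f^-=0$; the identity is exactly the one satisfied by any eigenfunction of the mixed eigenvalue problem (zero Dirichlet data on $\Gamma_\infty$, zero conormal data on $\Gamma$), and such eigenfunctions exist at the resonant wavenumbers. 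Your fallback via Poincar\'e only yields $\|f^-\|^2\le C_P\|\vec\nabla f^-\|^2\lesssim C_P k^2\|f^-\|^2$, which is vacuous unless $k$ is small. The extra piece of information that kills these modes is the vanishing of the \emph{Neumann} trace on $\Gamma_\infty$ as well, and exploiting an over-determined Cauchy datum requires a unique continuation theorem for the variable-coefficient interior operator --- not an energy estimate. This is precisely the crux of the paper's proof: since the convection coefficients are not analytic, the classical analytic-continuation argument is unavailable, and the paper instead invokes the strong unique continuation property of Garofalo and Lin~\cite{Garofalo} for second-order elliptic operators whose principal part $r\Xi$ is uniformly elliptic with Lipschitz coefficients, propagating $f\equiv 0$ from $\mathbb{R}^3\backslash\overline{B}$ into all of $\Omega$. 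Without some such unique continuation result your argument cannot close.
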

We give the proof since the non-uniform convection coefficients
do not have enough regularity to apply the classical argument of analytical continuation.
\begin{proof}
Suppose $\varsigma=0$, so that $f_{\rm inc}=0$, and let $f\in H^1_{\rm loc}(\Omega)$ solve~\eqref{eq:pb_sep}.
Then, $f$ solves~\eqref{eq:equa_init}. Let $B$ be an open ball containing $\Omega^-$.
Let $f^t\in H(\Delta,B)$. Using Green's first identity,
\begin{equation}
\label{eq:demoo}
\begin{aligned}
0&=\int_{\Omega\cap B}\left(-rk^2\beta \overline{f}-ikr\vec{V}\cdot\vec\nabla \overline{f}-\vec\nabla\cdot\left(irk\overline{f}\vec{V} + r\Xi\vec\nabla \overline{f}\right)\right)f^t\\
&=\int_{\Omega\cap B}r\Xi\vec\nabla \overline{f}\cdot\vec\nabla f^t-rk^2\beta \overline{f}f^t-ikr\vec{V}\cdot(\vec\nabla \overline{f} f^t-\vec\nabla f^t \overline{f})-\left({\gamma_{1,\partial B}^-}f, \gamma_{0,\partial B}^-f^t\right)_{\partial B},
\end{aligned}
\end{equation}
where $\gamma_{0,\partial B}^-$ and $\gamma_{1,\partial B}^-$ are the Dirichlet and Neumann traces on $\partial B$ from $B$.
Taking $f^t=f$ yields
\begin{equation}
\begin{aligned}
\left({\gamma_{1,\partial B}^-}f, \gamma_{0,\partial B}^-f\right)_{\partial B}=\int_{\Omega\cap B}r\Xi\vec\nabla \overline{f}\cdot\vec\nabla f-rk^2\beta \overline{f}f-2kr\vec{V}\cdot\left({\rm Im}\vec\nabla \overline{f} f\right),
\end{aligned}
\end{equation}
so that ${\rm Im}\left({\gamma_{1,\partial B}^-}f, \gamma_{0,\partial B}^-f\right)_{\partial B}=0$. Using Rellich Lemma
(see~\cite[Lemma 9.9]{mclean}),  since $f\in H^1_{\rm loc}(\mathbb{R}^3\backslash\overline{B})$ solves the classical Helmholtz equation in $\mathbb{R}^3\backslash\overline{B}$ and satisfies the
Sommerfeld radiation condition, as well as ${\rm Im}\left({\gamma_{1,\partial B}^-}f, \gamma_{0,\partial B}^-f\right)_{\partial B}\geq 0$,
it is inferred that $f|_{\mathbb{R}^3\backslash\overline{B}}\equiv 0$. Equation~\eqref{eq:equa_init1} can be written
\begin{equation}
L(f):=\left(r k^2 \beta +\vec\nabla\cdot(irk\vec{V})\right)f+2i r k \vec{V}\cdot\vec{\nabla}{f}
+\vec{\nabla}\cdot\left(r\Xi\vec{\nabla}{f}\right)=0\quad\textnormal{in }\Omega.
\end{equation}
From~\cite[Theorem 1.1]{Garofalo}, since $r\Xi$ is uniformly elliptic with Lipschitz continuous coefficients, and $r k^2 \beta +\vec\nabla\cdot(irk\vec{V})$
and $2i r k \vec{V}$ have bounded coefficients, the differential operator $L$ satisfies
the strong unique continuation property in $\Omega$. Hence, $f|_{\mathbb{R}^3\backslash\overline{B}}\equiv 0$ implies
that $f|_{\Omega}\equiv 0$.
\end{proof}



\subsection{Weak formulation in the interior domain ${\Omega^-}$}

Let $\Phi=f|_{\Omega^-}$ where $f$ solves~\eqref{eq:pb_sep}. Multiplying~\eqref{eq:pb_sep1}
by a test function $\Phi^t\in H^1({\Omega^-})$ and using a Green formula together with the boundary condition~\eqref{eq:pb_sep3}
yields
\begin{equation}
\mathcal{V}(\Phi,\Phi^t)-\left({\gamma_{1}^-}{\Phi},{\gamma_{0}^-}{\Phi}^t\right)_{\Gamma_\infty}=0,
\end{equation}
with the sesquilinear form
\begin{equation}
\label{eq:defV}
\mathcal{V}(\Phi,\Phi^t)=\int_{{\Omega^-}}r\Xi\vec{\nabla}\overline{\Phi}\cdot\vec{\nabla}{{\Phi}^t}-\int_{{\Omega^-}}r k^2
\beta \overline{\Phi}{{\Phi}^t}+i \int_{{\Omega^-}}r k \vec{V}\cdot\left(\overline{\Phi}\vec{\nabla}{{\Phi}^t}-{{\Phi}^t}
\vec{\nabla}\overline{\Phi}\right).
\end{equation}
Using the transmission conditions~\eqref{eq:pb_sep4}-\eqref{eq:pb_sep5}, $\gamma_0^-\Phi={\gamma_0} f$ and
$\gamma_1^-\Phi={\gamma_1} f$, so that the coupling with the exterior problem can be written as $\gamma_1^-\Phi=DtN(\gamma_0^- \Phi)$.
This yields the following coupled formulation:
Find $\Phi\in H^1\left({\Omega^-}\right)$ such that $\forall \Phi^t\in H^1\left({\Omega^-}\right)$,
\begin{equation}
\label{eq:fv_int2}
\mathcal{V}(\Phi,\Phi^t)-\left(DtN({\gamma_0^-}\Phi),{\gamma_0^-}{\Phi^t}\right)_{\Gamma_\infty}=0.
\end{equation}



The coupling is carried out by taking as $DtN$ map in~\eqref{eq:fv_int2} the maps presented in Sections~\ref{sec:unstabdtn} and~\ref{sec:stabdtn}.
We recall that Helmholtz equations, as well as corresponding boundary integral operators, are written
on a geometry and for unknown functions that have been transformed by the Prandtl--Glauert transformation;
the wave number of the source is $\hat{k}_\infty=\gamma_\infty{k}_\infty$
(see~\eqref{eq:kinfty}).

\subsection{Unstable coupled formulation with one surface unknown}
\label{sec:unstabcoupl}

Injecting $DtN_{\rm unstab}(\gamma_0^-\Phi)$ from~\eqref{eq:DtNunst} into the formulation~\eqref{eq:fv_int2} yields,
using~\eqref{eq:deflam}, the following variational formulation:
Find $\left(\Phi,\lambda\right)\in \mathcal{H}$ such that, $\forall
\left(\Phi^t,\lambda^t\right)\in \mathcal{H}$,
\begin{subequations}
\label{eq:coupledvf}
 \begin{align}
\dsp \mathcal{V}(\Phi,\Phi^t)
+\left(N({\gamma_0^-}\Phi),{\gamma_0^-}\Phi^t\right)_{\Gamma_{\infty}}+\left(\left({\tilde{D}}-\frac{1}{2}I\right)(\lambda),{\gamma_0^-}\Phi^t\right)_{\Gamma_{\infty}}
&= \left({\gamma_1}f_{\rm inc},{\gamma_0^-}\Phi^t\right)_{\Gamma_{\infty}},\\
\dsp \left(\left({D}-\frac{1}{2}I\right)({\gamma_0^-}\Phi),\lambda^t\right)_{\Gamma_{\infty}}-\left(S(\lambda),\lambda^t\right)_{\Gamma_{\infty}}
&= -\left({\gamma_0}f_{\rm inc},\lambda^t\right)_{\Gamma_{\infty}},
\end{align}
\end{subequations}
with product space $\mathcal{H}=H^{1}\left({\Omega^-}\right)\times H^{-\frac{1}{2}}\left({\Gamma_\infty}\right)$
and inner product $\left(\left(\Phi,\lambda\right),\left(\Phi^t,\lambda^t\right)\right)_{\mathcal{H}}=\left(\Phi,
\Phi^t\right)_{H^{1}\left({\Omega^-}\right)}+\left(\lambda,\lambda^t\right)_{H^{-\frac{1}{2}}\left({\Gamma_\infty}\right)}$.

If $f$ solves~\eqref{eq:pb_sep}, then $(f^-,\gamma_1 f)$ solves~\eqref{eq:coupledvf}, and more generally, 
$(f^-,\gamma_1 f+\lambda^*)$ solves~\eqref{eq:coupledvf} for all $\lambda^*\in\ker(S)$. Hence,
in the case of resonant frequencies,
i.e., $-\hat{k}_\infty^2\in\Lambda$, where $\ker(S)$ is not trivial,~\eqref{eq:coupledvf} admits infinitely many solutions.
Conversely,
if $(\Phi,\lambda)$ solves~\eqref{eq:coupledvf}, then $\mathcal{R}(\Phi,\lambda)$ solves~\eqref{eq:pb_sep}, where
$\mathcal{R}:\mathcal{H}\rightarrow H^1_{\rm loc}(\Omega\backslash\Gamma_\infty)$ is such that
$\mathcal{R}({\Phi},{\lambda})|_{\Omega^-}={\Phi}$ and
$\mathcal{R}({\Phi},{\lambda})|_{\Omega^+}=(-\mathcal{S}({\lambda}) + \mathcal{D}(\gamma_0^-{\Phi})+f_{\rm inc})|_{\Omega^+}$.
Notice that $\mathcal{R}(0,\lambda^*)=0$ for all $\lambda^*\in\ker(S)$. Thus, in the case of resonant frequencies,
where~\eqref{eq:coupledvf} admits infinitely many solutions, all of these solutions produce the same solution of~\eqref{eq:pb_sep}.
However, we will see in Section~\ref{sec:numres} that
the numerical procedure to approximate~\eqref{eq:coupledvf} becomes ill-conditioned so that
$\mathcal{R}({\Phi},{\lambda})$ is dominated by numerical
errors. For this reason, the formulation~\eqref{eq:coupledvf} is called unstable.

\begin{theorem}
\label{theounstab}
If $-\hat{k}_\infty^2\notin\Lambda$, problem~\eqref{eq:coupledvf} is well-posed.
\end{theorem}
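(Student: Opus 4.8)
The plan is to cast~\eqref{eq:coupledvf} as an operator equation on the Hilbert space $\mathcal{H}=H^1(\Omega^-)\times H^{-\frac12}(\Gamma_\infty)$ and to invoke a Fredholm-type argument, since the sesquilinear form will not be coercive but will differ from a coercive one by a compact perturbation. First I would write the left-hand side of~\eqref{eq:coupledvf} as $\mathcal{A}((\Phi,\lambda),(\Phi^t,\lambda^t))$ and split $\mathcal{A}=\mathcal{A}_0+\mathcal{K}$, where $\mathcal{A}_0$ collects the terms expected to be coercive: the principal part $\int_{\Omega^-} r\Xi\vec\nabla\overline{\Phi}\cdot\vec\nabla\Phi^t$ of $\mathcal{V}$ (coercive on $H^1(\Omega^-)$ up to an $L^2$ term, using the lower bound~\eqref{eq:usefullcoercivity} from the Proposition on $\Xi$), the hypersingular term $(N(\gamma_0^-\Phi),\gamma_0^-\Phi^t)_{\Gamma_\infty}$ (which is known to be coercive on $H^{1/2}(\Gamma_\infty)$ up to a compact perturbation, since $N=N_0+\text{compact}$ with $N_0$ the Laplace hypersingular operator), and $-(S(\lambda),\lambda^t)_{\Gamma_\infty}$ (the single-layer operator $S=S_0+\text{compact}$ with $S_0$ the Laplace single-layer, which is coercive on $H^{-1/2}(\Gamma_\infty)$). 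The remaining terms — the $rk^2\beta$ term and the first-order $\vec V$-terms in $\mathcal{V}$, and the off-diagonal coupling blocks involving $D-\tfrac12 I$ and $\tilde D-\tfrac12 I$ — I would argue define a compact operator $\mathcal{K}$ on $\mathcal{H}$: the volume terms are compact by the compact embedding $H^1(\Omega^-)\hookrightarrow L^2(\Omega^-)$ (and the trace $H^1(\Omega^-)\to H^{1/2-\epsilon}(\Gamma_\infty)$ is compact for the coupling terms), and $D$, $\tilde D$ themselves map into the right spaces with the off-diagonal structure making the block operator compact on $\mathcal{H}$.

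Next I would verify that $\mathcal{A}_0$ satisfies a Gårding inequality: there exist constants $c>0$ and a compact operator $\mathcal{T}$ such that ${\rm Re}\,\mathcal{A}_0((\Phi,\lambda),(\Phi,\lambda)) + {\rm Re}\langle \mathcal{T}(\Phi,\lambda),(\Phi,\lambda)\rangle \ge c\,\|(\Phi,\lambda)\|_{\mathcal{H}}^2$. Here one must be slightly careful about the sign structure: the diagonal blocks carry opposite-looking signs ($+N$ on the $\Phi$-block, $-S$ on the $\lambda$-block), so I would take real and imaginary parts appropriately, or multiply the second equation by $-1$ and/or a suitable complex constant so that the two coercive contributions add up with the same sign — mimicking the standard trick in symmetric BEM-FEM coupling analyses (as in Costabel's symmetric coupling, referenced in the remark). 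The $L^2(\Omega^-)$-defect from the $\Xi$-coercivity, the compact defects of $N$ and $S$ from their Laplace principal parts, and the whole of $\mathcal{K}$ then get absorbed into $\mathcal{T}$.

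With the Gårding inequality in hand, $\mathcal{A}$ is a compact perturbation of a coercive form, so the associated operator is Fredholm of index zero on $\mathcal{H}$, and well-posedness reduces to injectivity. For injectivity I would take $f_{\rm inc}=0$ (equivalently $\varsigma=0$) and suppose $(\Phi,\lambda)$ solves the homogeneous~\eqref{eq:coupledvf}; then by the reconstruction map $\mathcal{R}$ already introduced in the text, $f:=\mathcal{R}(\Phi,\lambda)$ solves the homogeneous transmission problem~\eqref{eq:pb_sep}, and by Proposition~\ref{theouni} (uniqueness for~\eqref{eq:pb_sep}) we get $f\equiv 0$, hence $\Phi=f^-\equiv 0$. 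It then remains to deduce $\lambda=0$: from $\mathcal{R}(0,\lambda)|_{\Omega^+}=(-\mathcal{S}(\lambda))|_{\Omega^+}=0$ together with $\Phi=0$ in the equations, the second line of~\eqref{eq:coupledvf} forces $S(\lambda)=0$, i.e.\ $\lambda\in\ker(S)$, and since $-\hat k_\infty^2\notin\Lambda$ we have $\ker(S)=\{0\}$ by the characterization recalled just before the remark, so $\lambda=0$. Thus the homogeneous problem has only the trivial solution, and Fredholm index zero upgrades this to existence and uniqueness for every right-hand side, giving well-posedness; the boundedness/continuity of all the forms involved is routine from the mapping properties of $\mathcal{V}$, $S$, $D$, $\tilde D$, $N$ quoted in Section~\ref{sec:basic_ing}. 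The main obstacle I anticipate is getting the sign/conjugation bookkeeping right in the Gårding inequality so that the coercive contributions from $N$ and from $-S$ genuinely reinforce rather than cancel — this is exactly where the precise algebraic form of the symmetric coupling matters, and where the lower bound on $\Xi$ in the Proposition is used to handle the FEM block.
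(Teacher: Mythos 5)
Your proposal is correct and follows essentially the same route as the paper: the paper omits the proof of Theorem~\ref{theounstab} precisely because it proceeds as in Theorem~\ref{prop_exi_uni_stab}, i.e.\ a splitting of the sesquilinear form into a coercive part (the $r\Xi$ principal part via~\eqref{eq:usefullcoercivity}, plus the zero-wavenumber operators $N^0$, $S^0$ with the second equation sign-flipped so the boundary contributions reinforce) and a compact remainder, followed by the Fredholm alternative and uniqueness via Proposition~\ref{theouni}. Your additional step recovering $\lambda=0$ from $\Phi=0$ through $S\lambda=0$ and $\ker(S)=\{0\}$ when $-\hat{k}_\infty^2\notin\Lambda$ is exactly the point where the hypothesis enters, consistent with the discussion preceding the theorem in Section~\ref{sec:unstabcoupl}.
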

\begin{proof}
The proof is omitted since it proceeds in the same fashion as that of Theorem~\ref{prop_exi_uni_stab} below.
\end{proof}

\begin{remark}[Symmetry of the system]
In the system~\eqref{eq:coupledvf}, the only non-symmetric contribution results from the vector $\vec{V}$
in the sesquilinear form $\mathcal{V}$, cf.~\eqref{eq:defV}.
The system becomes symmetric when the flow is uniform everywhere.
Notice that this is not an Hermitian symmetry.
The operators $D$ and $\tilde{D}$ are dual but not adjoint.
\end{remark}

\subsection{Stable coupled formulation with two surface unknowns}
\label{sec:stabcoupl}

Injecting $DtN_{\rm stab}(\gamma_0^-\Phi)$ from~\eqref{eq:dtnst} into the formulation~\eqref{eq:fv_int2} yields,
using~\eqref{eq:lambdast} and~\eqref{eq:pst}, the following variational formulation:
Find $\left(\Phi,\lambda,p\right)\in \mathbb{H}$ such that
$\forall\left(\Phi^t,\lambda^t, p^t\right)\in \mathbb{H}$,
\begin{subequations}
\label{eq:weakcouptrans}
\begin{align}
\dsp \mathcal{V}(\Phi,\Phi^t)+\left(N({\gamma_0^-}\Phi),{\gamma_0^-}\Phi^t\right)_{\Gamma_{\infty}}+\left(\left({\tilde{D}}-\frac{1}{2}I\right)(\lambda),{\gamma_0^-}\Phi^t\right)_{\Gamma_{\infty}}
&= \left({\gamma_1}f_{\rm inc},{\gamma_0^-}\Phi^t\right)_{\Gamma_{\infty}},\label{eq:weakcouptrans1}\\
\dsp \left(\left({D}-\frac{1}{2}I\right)({\gamma_0^-}\Phi),\lambda^t\right)_{\Gamma_{\infty}}-\left(S(\lambda),\lambda^t\right)_{\Gamma_{\infty}}
+i\overline{\eta}\left(p,\lambda^t\right)_{\Gamma_{\infty}}
&= -\left({\gamma_0}f_{\rm inc},\lambda^t\right)_{\Gamma_{\infty}},\label{eq:weakcouptrans2}\\
\dsp \left(N({\gamma_0^-}\Phi),p^t\right)_{\Gamma_{\infty}}+\left(\left({\tilde{D}}+\frac{1}{2}I\right)(\lambda),p^t\right)_{\Gamma_{\infty}}
-\delta_{\Gamma_\infty}(p,p^t)&= \left({\gamma_1}f_{\rm inc},p^t\right)_{\Gamma_\infty},\label{eq:weakcouptrans3}
\end{align}
\end{subequations}
with product space $\mathbb{H}=H^{1}\left({\Omega^-}\right)\times H^{-\frac{1}{2}}\left({\Gamma_\infty}\right)\times H^{1}({\Gamma_\infty})$
and inner product
$\left(\left(\Phi,\lambda,p\right),\left(\Phi^t,\lambda^t,p^t\right)\right)_{\mathbb{H}}=\left(\Phi, \Phi^t\right)_{H^{1}\left({\Omega^-}\right)}+\left(\lambda,\lambda^t\right)_{H^{-\frac{1}{2}}\left({\Gamma_\infty}\right)}
+\left(p,p^t\right)_{H^{1}\left({\Gamma_\infty}\right)}$.

If $f$ solves~\eqref{eq:pb_sep}, then $(f^-,\gamma_1 f,0)$ solves~\eqref{eq:weakcouptrans}. Conversely,
if $(\Phi,\lambda, p)$ solves~\eqref{eq:weakcouptrans}, then $\mathcal{R}(\Phi,\lambda)$ solves~\eqref{eq:pb_sep}
and $p=0$, where $\mathcal{R}$ is defined in Section~\ref{sec:unstabcoupl}.

\begin{theorem}
\label{prop_exi_uni_stab}
Problem~\eqref{eq:weakcouptrans} is well-posed at all frequencies.
\end{theorem}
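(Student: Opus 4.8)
The goal is to show that the three-field system~\eqref{eq:weakcouptrans} is well-posed, at every frequency. I would proceed by a Fredholm-alternative / Gårding-inequality argument. The plan is first to write the bilinear form $a((\Phi,\lambda,p),(\Phi^t,\lambda^t,p^t))$ associated with the left-hand side of~\eqref{eq:weakcouptrans} on the Hilbert space $\mathbb{H}$, and to split it as $a = a_0 + a_c$, where $a_0$ collects the ``principal'' terms (the coercive part of $\mathcal{V}$, namely $\int_{\Omega^-} r\Xi\vec\nabla\overline\Phi\cdot\vec\nabla\Phi^t + \int_{\Omega^-} r\overline\Phi\Phi^t$, the term $-S(\lambda)$ tested against $\lambda^t$, and $-\delta_{\Gamma_\infty}(p,p^t)$), and $a_c$ collects everything that is compact or lower-order. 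The three principal blocks are coercive (up to sign) on their respective spaces: $\int r\Xi\vec\nabla\overline\Phi\cdot\vec\nabla\Phi^t$ on $H^1_0$-type directions by the Proposition giving $\overline{\vec U}\cdot\Xi\vec U\ge(1-M_0^2)\|\vec U\|^2$; $\langle S\lambda,\lambda\rangle$ is coercive on $H^{-1/2}(\Gamma_\infty)$ up to a compact perturbation (classical coercivity of the single-layer operator for the Helmholtz equation, e.g.~\cite{sauter, mclean}); and $\delta_{\Gamma_\infty}$ is coercive on $H^1(\Gamma_\infty)$ by definition~\eqref{eq:defc}. This yields a Gårding inequality $\operatorname{Re} a((\Phi,\lambda,p),(\Phi,\lambda,p)) \ge c\,\|(\Phi,\lambda,p)\|_{\mathbb{H}}^2 - \|\mathcal{K}(\Phi,\lambda,p)\|_{\mathbb{H}}^2$ for a compact operator $\mathcal K$, after checking that the coupling terms involving $N$, $D$, $\tilde D$, and the first-order $\vec V$-term in $\mathcal{V}$ are all compact relative to $\mathbb{H}$ — $N:H^{1/2}\to H^{-1/2}$ tested against $H^{1/2}$ traces gains no regularity, so one instead uses that the cross terms $(N\gamma_0^-\Phi,p^t)$ and $(N\gamma_0^-\Phi,\gamma_0^-\Phi^t)$ are bounded on $\mathbb{H}$ and that $N$ differs from a coercive operator by a compact one, and similarly the terms with $\tilde D$ are compact since $\tilde D:H^{-1/2}\to H^{-1/2}$ is compact after composition with the trace embedding.

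Once Gårding holds, well-posedness reduces by the Fredholm alternative to \emph{injectivity}: I must show that the only solution of the homogeneous problem (with $f_{\rm inc}=0$, hence $\varsigma=0$) is $(\Phi,\lambda,p)=(0,0,0)$. This is where the structure of the CFIE enters and where I expect the main work. The argument is: given a homogeneous solution $(\Phi,\lambda,p)$, reconstruct $f$ on all of $\Omega$ by $f|_{\Omega^-}=\Phi$ and $f|_{\Omega^+}=(-\mathcal S(\lambda)+\mathcal D(\gamma_0^-\Phi))|_{\Omega^+}=\mathcal R(\Phi,\lambda)|_{\Omega^+}$. Unwinding the three equations — which are precisely~\eqref{eq:DtNsys3} tested against the respective test spaces plus the interior weak form — one checks that $f$ solves the transmission problem~\eqref{eq:pb_sep} with zero data; the role of the third equation~\eqref{eq:weakcouptrans3} together with the bijectivity of $S+i\eta M(\tfrac12 I+\tilde D)$ from~\cite[Lemma 4.1]{buffa2} is exactly to force $p=0$ and to guarantee that the reconstructed Cauchy data $(\gamma_0 f,\gamma_1 f)$ are genuine traces of a radiating Helmholtz solution rather than a spurious element of $\ker S$. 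Then Proposition~\ref{theouni} (uniqueness for~\eqref{eq:pb_sep}, proved above via strong unique continuation) gives $f\equiv 0$ on $\Omega$, hence $\Phi=0$ and, from $-\mathcal S(\lambda)+\mathcal D(0)=0$ in $\Omega^+$ together with the jump relations for the layer potentials, $\lambda=\gamma_1^+(\mathcal S\lambda)^+-\gamma_1^-(\mathcal S\lambda)^-=0$; and $p=0$ from~\eqref{eq:weakcouptrans3}.

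The main obstacle, and the step I would spend the most care on, is the injectivity step — specifically verifying that the homogeneous weak solution really does yield a strong transmission solution and that $p$ is forced to vanish. The delicate point is that~\eqref{eq:weakcouptrans} is obtained from~\eqref{eq:DtNsys3} by \emph{testing}, so from the equations one recovers the operator identities $(\tfrac12 I - D)\gamma_0^-\Phi + i\eta MN\gamma_0^-\Phi + (S+i\eta M(\tfrac12 I+\tilde D))\lambda = 0$ and $N\gamma_0^-\Phi + (\tfrac12 I+\tilde D)\lambda = \delta_{\Gamma_\infty}p/1 \cdot(\cdot)$, i.e. $p = M(N\gamma_0^-\Phi + (\tfrac12 I+\tilde D)\lambda)$ from~\eqref{eq:weakcouptrans3} read against all $p^t\in H^1(\Gamma_\infty)$. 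Substituting this $p$ back, one has to argue that $(\Phi,\lambda)$ then satisfies exactly the system that the (injective) DtN-based formulation~\eqref{eq:coupledvf}-type reasoning controls: here the bijectivity of $S+i\eta M(\tfrac12 I+\tilde D)$ for $\operatorname{Re}\eta\ne 0$ lets one solve for $\lambda$ in terms of $\gamma_0^-\Phi$ uniquely, and the resulting closed equation in $\Phi$ alone is the weak form of the interior problem with the genuine DtN operator, whose kernel is trivial by Proposition~\ref{theouni}. I would also record, for the Fredholm argument, that $\mathbb{H}$ is a Hilbert space and the embeddings $H^1(\Omega^-)\hookrightarrow L^2(\Omega^-)$, $\gamma_0^-:H^1(\Omega^-)\to H^{1/2}(\Gamma_\infty)$ composed with $H^{1/2}\hookrightarrow$ appropriate negative-order spaces, and $H^1(\Gamma_\infty)\hookrightarrow L^2(\Gamma_\infty)$ are compact, which is what makes $a_c$ a compact perturbation.
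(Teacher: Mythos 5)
Your overall architecture --- a G\aa rding inequality plus the Fredholm alternative, with injectivity obtained by reconstructing $f=\mathcal{R}(\Phi,\lambda)$, forcing $p=0$ through the bijectivity of $S+i\eta M\left(\tfrac12 I+\tilde D\right)$, and invoking Proposition~\ref{theouni} --- is exactly the paper's, and your injectivity step is sound. The genuine gap is in the G\aa rding inequality, specifically in your treatment of the off-diagonal coupling terms $\left(\left(\tilde D-\tfrac12 I\right)(\lambda),\gamma_0^-\Phi^t\right)_{\Gamma_\infty}$ and $\left(\left(D-\tfrac12 I\right)(\gamma_0^-\Phi),\lambda^t\right)_{\Gamma_\infty}$. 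You dispose of them by asserting that $\tilde D:H^{-\frac12}(\Gamma_\infty)\to H^{-\frac12}(\Gamma_\infty)$ is ``compact after composition with the trace embedding''. This fails in the paper's setting: $\Gamma_\infty$ is only Lipschitz (deliberately so, to allow polyhedral coupling surfaces coming from the mesh), and on Lipschitz surfaces $D$ and $\tilde D$ are \emph{not} compact; moreover the pairing $H^{-\frac12}\times H^{\frac12}$ is the sharp duality, so there is no compact embedding left over to absorb these terms.

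The paper's fix is structural rather than analytic. After multiplying the second equation by $-1$ (which you must actually do so that the $-S$ and $-\delta_{\Gamma_\infty}$ blocks become positive --- your parenthetical ``up to sign'' has to be executed, not just noted), the two cross terms appear with opposite signs; since $\tilde D^0$ is the transpose of $D^0$ with a real kernel, their sum on the diagonal $(\Phi^t,\lambda^t)=(\Phi,\lambda)$ is purely imaginary and drops out of ${\rm Re}\,a_1$. Only the differences $S-S^0$, $D-D^0$, $\tilde D-\tilde D^0$, $N-N^0$ (operators at wavenumber $\hat k_\infty$ minus those at wavenumber $0$) are compact and are relegated to the perturbation $a_2$, together with the $\vec{V}$- and mass terms of $\mathcal{V}$ and the couplings to $p$ (the latter are genuinely compact via $H^1(\Gamma_\infty)\hookrightarrow\hookrightarrow H^{\frac12}(\Gamma_\infty)$, as you correctly observe). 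Without this skew-Hermitian cancellation your decomposition does not yield a G\aa rding inequality on a Lipschitz $\Gamma_\infty$; it would only go through if $\Gamma_\infty$ were assumed smooth.
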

\begin{proof}
First, using Proposition~\ref{theouni}, we can show that Problem~\eqref{eq:weakcouptrans} has at most one solution.
Then, consider the two sesquilinear forms $a_1$ and $a_2$ on $\mathbb{H}\times\mathbb{H}$ such that
\begin{equation}
\begin{array}{ll}
\dsp a_1\left(\left(\Phi,\lambda,p\right),\left(\Phi^t,\lambda^t,p^t\right)\right)=\int_{{\Omega^-}}{r\Xi\vec{\nabla}{\overline{\Phi}}\cdot\vec{\nabla}{{\Phi}^t}}
+\left(N^0({\gamma_0^-}\Phi),{\gamma_0^-}\Phi^t\right)_{\Gamma_{\infty}}+{\left(S^0(\lambda),\lambda^t\right)_{\Gamma_{\infty}}}+\delta_{\Gamma_\infty}(p,p^t)\\
\vspace{0.3cm}
\qquad \dsp +\left(\left(\tilde{D}^0-\frac{1}{2}I\right)(\lambda),{\gamma_0^-}\Phi^t\right)_{\Gamma_{\infty}}-{\left(\left({D^0}-\frac{1}{2}I\right)({\gamma_0^-}\Phi),{\lambda}^t\right)_{\Gamma_{\infty}}},\\
\dsp a_2\left(\left(\Phi,\lambda,p\right),\left(\Phi^t,\lambda^t,p^t\right)\right)=-\int_{{\Omega^-}}{{rk^2}\beta \overline{\Phi}{\Phi}^t}
+i\int_{{\Omega^-}}{{r k} \vec{V}\cdot\left({\overline{\Phi}}\vec{\nabla}{{\Phi}^t}-{{\Phi}^t}\vec{\nabla}{\overline{\Phi}}\right)}
+\left(\left(N-N^0\right)({\gamma_0^-}\Phi),{\gamma_0^-}\Phi^t\right)_{\Gamma_{\infty}}\\
\vspace{0.3cm}
\dsp \qquad +{\left(\left(S-S^0,\lambda^t\right)(\lambda)\right)_{\Gamma_{\infty}}}+\left(\left(\tilde{D}-\tilde{D}^0\right)(\lambda),{\gamma_0^-}\Phi^t\right)_{\Gamma_{\infty}}
-{\left(\left({D}-{D^0},{\lambda}^t\right)({\gamma_0^-}\Phi)\right)_{\Gamma_{\infty}}}-i\overline{\eta}{\left(p,\lambda^t\right)_{\Gamma_{\infty}}}\\
\vspace{0.3cm}
\dsp \qquad -\left(N({\gamma_0^-}\Phi),p^t\right)_{\Gamma_{\infty}}-\left(\left({\tilde{D}}+\frac{1}{2}I\right)(\lambda),p^t\right)_{\Gamma_{\infty}},
\end{array}
\end{equation}
where $S^0$, $D^0$, $\tilde{D}^0$ and $N^0$ are the boundary integral operators $S$, $D$, $\tilde{D}$ and $N$ for
$\hat{k}_\infty=0$.
For the volumic term $\mathcal{V}$, our writing of the convected Helmholtz equation using the
Prandtl--Glauert transformation in the form~\eqref{eq:equa_init1} allows us to readily see that $a_1$ is $\mathbb{H}\text{-coercive}$
owing to~\eqref{eq:usefullcoercivity}. Then, since the linear map associated with $a_2$ is classically compact from
$\mathbb{H}$ into $\mathbb{H}$ (see~\cite[Lemma 3.9.8]{sauter}), the assertion follows by the Fredholm alternative and the uniqueness of the solution.
\end{proof}


\section{Finite-dimensional approximation}
\label{sec:findim}

The coupled formulations~\eqref{eq:coupledvf} and~\eqref{eq:weakcouptrans} are approximated by FEM and BEM.
We briefly recall the underlying results from both theories. Then, we discuss in more detail the structure of the
linear systems and the algorithms for their numerical resolution.

\subsection{Discrete finite element spaces}


Let $\mathcal{M}$ be a shape-regular tetrahedral mesh of ${\Omega^-}$. The mesh $\mathcal{F}_\infty$ of $\Gamma_\infty$
is composed of the boundary faces of $\mathcal{M}$. Let $h_\mathcal{M}>0$ denote the mesh size,
$V^1_\mathcal{M}$ the space of continuous piecewise affine polynomials on $\mathcal{M}$,
$S^0_\mathcal{M}$ the space of piecewise constant polynomials on $\mathcal{F}_\infty$, and
$S^1_\mathcal{M}$ the space of continuous piecewise affine polynomials on $\mathcal{F}_\infty$.
Let $\mathcal{H}_\mathcal{M}=V^1_\mathcal{M}\times S^0_\mathcal{M}$, and $\mathbb{H}_\mathcal{M}=V^1_\mathcal{M}\times S^0_\mathcal{M}\times S^1_\mathcal{M}$.
The discretization of~\eqref{eq:coupledvf} reads: Find $\left(\Phi_\mathcal{M},\lambda_\mathcal{M}\right)\in \mathcal{H}_\mathcal{M}$
such that, $\forall \left(\Phi_\mathcal{M}^t,\lambda_\mathcal{M}^t\right)\in \mathcal{H}_\mathcal{M}$,
\begin{equation}
a^{\rm unstab}\left(\left(\Phi_\mathcal{M},\lambda_\mathcal{M}\right), \left(\Phi_\mathcal{M}^t,\lambda_\mathcal{M}^t\right)\right)=b^{\rm unstab}\left(\Phi_\mathcal{M}^t,\lambda_\mathcal{M}^t\right),
\label{eq:coupledvfun_num}
\end{equation}
with $a^{\rm unstab}$ and $b^{\rm unstab}$ readily deduced from~\eqref{eq:coupledvf},
while the discretization of~\eqref{eq:weakcouptrans} reads:
Find $\left(\Phi_\mathcal{M},\lambda_\mathcal{M}, p_\mathcal{M}\right)\in \mathbb{H}_\mathcal{M}$
such that, $\forall \left(\Phi_\mathcal{M}^t,\lambda_\mathcal{M}^t, p_\mathcal{M}^t\right)\in \mathbb{H}_\mathcal{M}$,
\begin{equation}
a^{\rm stab}\left(\left(\Phi_\mathcal{M},\lambda_\mathcal{M}, p_\mathcal{M}\right), \left(\Phi_\mathcal{M}^t,\lambda_\mathcal{M}^t, p_\mathcal{M}^t\right)\right)=b^{\rm stab}\left(\Phi_\mathcal{M}^t,\lambda_\mathcal{M}^t, p_\mathcal{M}^t\right),
\label{eq:coupledvfst_num}
\end{equation}
with $a^{\rm stab}$ and $b^{\rm stab}$ readily deduced from~\eqref{eq:weakcouptrans}.
Since $\mathcal{H}_\mathcal{M}\subset \mathcal{H}$ and $\mathbb{H}_\mathcal{M}\subset \mathbb{H}$, both approximations are conforming.

In what follows, $A\lesssim B$ denotes the inequality $A\leq cB$ with positive constant $c$ independent
of the mesh size and of the discrete and exact solutions.
Owing to classical approximation properties~\cite{Brenner, Ern, sauter}, there holds,
$\forall (\Phi,\lambda)\in H^2(\Omega^-)\times H^{\frac{1}{2}}(\Gamma_\infty)$,
\begin{equation}
\label{eq:app_con}
\inf_{\left(\Phi_\mathcal{M},\lambda_\mathcal{M}\right)\in\mathcal{H}_\mathcal{M}}\left\|(\Phi,\lambda)-(\Phi_\mathcal{M},\lambda_\mathcal{M})\right\|_{\mathcal{H}}\lesssim h_\mathcal{M}\left(\left\|\Phi\right\|_{H^2(\Omega^-)}+\left\|\lambda\right\|_{H^{\frac{1}{2}}\left(\Gamma_\infty\right)}\right),
\end{equation}
and $\forall (\Phi, \lambda, p)\in H^2(\Omega^-)\times H^{\frac{1}{2}}(\Gamma_\infty)\times H^2(\Gamma_\infty)$,
\begin{equation}
\label{eq:app_const}
\inf_{\left(\Phi_\mathcal{M},\lambda_\mathcal{M}, p_\mathcal{M}\right)\in\mathbb{H}_\mathcal{M}}\left\|(\Phi,\lambda,p)-(\Phi_\mathcal{M},\lambda_\mathcal{M},p_\mathcal{M})\right\|_{\mathbb{H}}\lesssim h_\mathcal{M}\left(\left\|\Phi\right\|_{H^2(\Omega^-)}+\left\|\lambda\right\|_{H^{\frac{1}{2}}\left(\Gamma_\infty\right)}+\left\|p\right\|_{H^2(\Gamma_\infty)}\right).
\end{equation}

\begin{remark}
Taking a polynomial approximation with one order less for $H^{-\frac{1}{2}}\left(\Gamma_\infty\right)$ than for $H^{1}\left({\Omega^-}\right)$ and $H^{1}\left(\Gamma_\infty\right)$
yields that all the approximations have the same order in $h_\mathcal{M}$.
\end{remark}

The following error estimates follow from~\cite[Theorem 13]{Wendland}:
If $-\hat{k}_\infty^2\notin \Lambda$ and $h_\mathcal{M}$ is small enough, the discrete problem~\eqref{eq:coupledvfun_num} has a unique solution
$\left(\Phi_\mathcal{M},\lambda_\mathcal{M}\right)\in\mathcal{H}_{\mathcal{M}}$, and the following optimal error estimate holds:
\begin{equation}
\label{eq:prop_cvg}
 \|\left(\Phi,\lambda\right)-\left(\Phi_\mathcal{M},\lambda_\mathcal{M}\right)\|_{\mathcal{H}}\lesssim \inf_{\left(\Phi_\mathcal{M}^t,\lambda_\mathcal{M}^t\right)\in\mathcal{H}_\mathcal{M}} \|\left(\Phi,\lambda\right)-\left(\Phi_\mathcal{M}^t,\lambda_\mathcal{M}^t\right)\|_{\mathcal{H}},
\end{equation}
where $\left(\Phi,\lambda\right)$ is the unique solution of~\eqref{eq:coupledvf}.
Moreover, at all frequencies and if $h_\mathcal{M}$ is small enough, the discrete problem~\eqref{eq:coupledvfst_num} has a unique solution
$\left(\Phi_\mathcal{M},\lambda_\mathcal{M}, p_\mathcal{M}\right)\in\mathbb{H}_{\mathcal{M}}$, and the following optimal error estimate holds:
\begin{equation}
\label{eq:prop_cvgst}
 \|\left(\Phi,\lambda,p\right)-\left(\Phi_\mathcal{M},\lambda_\mathcal{M}, p_\mathcal{M}\right)\|_{\mathbb{H}}\lesssim \inf_{\left(\Phi_\mathcal{M}^t,\lambda_\mathcal{M}^t,p_\mathcal{M}^t\right)\in\mathbb{H}_\mathcal{M}} \|\left(\Phi,\lambda,p\right)-\left(\Phi_\mathcal{M}^t,\lambda_\mathcal{M}^t,p_\mathcal{M}^t\right)\|_{\mathbb{H}},
\end{equation}
where $\left(\Phi,\lambda,p\right)$ is the unique solution of~\eqref{eq:weakcouptrans}.

\begin{remark}
The constant in~\eqref{eq:prop_cvg} depends on $\hat{k}_\infty$, and its value explodes as $-\hat{k}_\infty^2$ tends to
an element of $\Lambda$. The constant in~\eqref{eq:prop_cvgst} depends on $\hat{k}_\infty$ as well, but remains bounded on
any bounded set of frequencies.
\end{remark}

\subsection{Structure of linear systems}
\label{sec:discrete}

\subsubsection{Unstable formulation with one surface unknown}
Let $(\theta_i)_{1\leq i\leq p}$ and $(\psi_i)_{1\leq i\leq q}$ denote finite element bases for $V^1_{\mathcal{M}}$ and $S^0_\mathcal{M}$ respectively.
These basis functions are real-valued.
The decompositions of $\Phi_{\mathcal{M}}\in V^1_{\mathcal{M}}$ and $\lambda_{\mathcal{M}}\in S^0_{\mathcal{M}}$ on these bases
are written in the form $\Phi_{\mathcal{M}}=\sum_{i=1}^p {\Phi_{\mathcal{M}}}_i \theta_i$ and $\lambda_{\mathcal{M}}=\sum_{i=1}^q {\lambda_{\mathcal{M}}}_i \psi_i$.
Let
\begin{equation}
u^{\rm unstab}_{\mathcal{M}}=\begin{pmatrix}
\begin{array}{c}
({\Phi_{\mathcal{M}}}_i)_{~{1\leq i\leq p}} \\
({\lambda_{\mathcal{M}}}_i)_{~{1\leq i\leq q}}
\end{array}
\end{pmatrix},\qquad
B^{\rm unstab}=\begin{pmatrix}
\begin{array}{c}
\left({\gamma_1}f_{\rm inc}, \gamma_0^-\theta_i\right)_{{\Gamma_\infty}~{1\leq i\leq p}}\\
- \left({\gamma_0}f_{\rm inc},\psi_i\right)_{{\Gamma_\infty}~{1\leq i\leq q}}
\end{array}
\end{pmatrix},
\end{equation}
\begin{equation}
A^{\rm unstab}=\begin{pmatrix}
\begin{array}{c|c}
\mathcal{V}(\theta_j,\theta_i)+{\left(N(\gamma_0^-\theta_j),\gamma_0^-\theta_i\right)_{\Gamma_{\infty}}} &
{\left(\left({\tilde{D}}-\frac{1}{2}I\right)(\psi_j),\gamma_0^-\theta_i\right)_{\Gamma_{\infty}}} \\
\hline
{\left(\left({D}-\frac{1}{2}I\right)(\gamma_0^-\theta_j),\psi_i\right)_{\Gamma_{\infty}}}&
-{\left(S(\psi_j),\psi_i\right)_{\Gamma_{\infty}}}
\end{array}
\end{pmatrix},
\end{equation}
where in $A^{\rm unstab}$ the index $i$ refers to the rows and the index $j$ to the columns.
The linear system resulting from~\eqref{eq:coupledvfun_num} is
\begin{equation}
\label{eq:mat}
A^{\rm unstab}u^{\rm unstab}_\mathcal{M}=B^{\rm unstab}.
\end{equation}
To better understand the structure of the linear system~\eqref{eq:mat}, the basis functions
$(\theta_i)_{1\leq i\leq p}$ of $V^1_{\mathcal{M}}$ are separated into two sets: the basis functions
$(\theta^{\mathcal{F}_\infty}_i)_{1\leq i\leq p^{\mathcal{F}_\infty}}$
associated with the mesh vertices
located in ${\mathcal{F}_\infty}$, and
$(\theta^{\mathring{\mathcal{M}}}_i)_{1\leq i\leq
  p^{\mathring{\mathcal{M}}}}$ associated with the mesh
  vertices located in $\Omega^-$; clearly, $p=p^{\mathcal{F}_\infty}+p^{\mathring{\mathcal{M}}}$.
The matrix $A^{\rm unstab}$ can then be further decomposed as
\begin{equation}
\label{eq:matrix:unst:blocks}
A^{\rm unstab}=\begin{pmatrix}
\begin{array}{c|c|c}
A^{\rm unstab}_{1,1} &
A^{\rm unstab}_{1,2} &
0\\
\hline
A^{\rm unstab}_{2,1} &
A^{\rm unstab}_{2,2} &
A^{\rm unstab}_{2,3}\\
\hline
0 &
A^{\rm unstab}_{3,2} &
A^{\rm unstab}_{3,3}
\end{array}
\end{pmatrix},
\end{equation}
where $\left(A^{\rm unstab}_{1,1}\right)_{i,j}=\mathcal{V}(\theta^{\mathring{\mathcal{M}}}_j,\theta^{\mathring{\mathcal{M}}}_i)$,
$\left(A^{\rm unstab}_{1,2}\right)_{i,j}=\mathcal{V}(\theta^{\mathcal{F}_\infty}_j,\theta^{\mathring{\mathcal{M}}}_i)$,
$\left(A^{\rm unstab}_{2,1}\right)_{i,j}=\mathcal{V}(\theta^{\mathring{\mathcal{M}}}_j,\theta^{\mathcal{F}_\infty}_i)$,
$\left(A^{\rm unstab}_{2,2}\right)_{i,j}=\mathcal{V}(\theta^{\mathcal{F}_\infty}_j,\theta^{\mathcal{F}_\infty}_i) +{\left(N(\gamma_0^-\theta^{\mathcal{F}_\infty}_j),\gamma_0^-\theta^{\mathcal{F}_\infty}_i\right)_{\Gamma_{\infty}}}$,
$\left(A^{\rm unstab}_{2,3}\right)_{i,j}={\left(\left({\tilde{D}}-\frac{1}{2}I\right)(\psi_j),\gamma_0^-\theta^{\mathcal{F}_\infty}_i\right)_{\Gamma_{\infty}}}$,
$\left(A^{\rm unstab}_{3,2}\right)_{i,j}={\left(\left({D}-\frac{1}{2}I\right)(\gamma_0^-\theta^{\mathcal{F}_\infty}_j),\psi_i\right)_{\Gamma_{\infty}}}$ and
$\left(A^{\rm unstab}_{3,3}\right)_{i,j}=-{\left(S(\psi_j),\psi_i\right)_{\Gamma_{\infty}}}$
All the blocks are complex-valued.
The blocks $A^{\rm unstab}_{1,1}$, $A^{\rm unstab}_{1,2}$ and $A^{\rm unstab}_{2,1}$ are sparse.
The block $A^{\rm unstab}_{1,1}$ is not symmetric, and the block $A^{\rm unstab}_{1,2}$
is neither the transpose nor the Hermitian transpose of the block $A^{\rm unstab}_{2,1}$.
The block $A^{\rm unstab}_{2,2}$ has two contributions: one sparse and nonsymmetric and one dense and symmetric;
therefore, this block is dense and nonsymmetric. The blocks $A^{\rm unstab}_{2,3}$, $A^{\rm unstab}_{3,2}$ and $A^{\rm unstab}_{3,3}$
are dense. The block $A^{\rm unstab}_{2,3}$ is the transpose of the block $A^{\rm unstab}_{3,2}$, and the block $A^{\rm unstab}_{3,3}$ is symmetric.


\subsubsection{Stable formulation with two surface unknowns}

Let $(\xi_i)_{1\leq i\leq r}$ denote a finite element basis for $S^1_\mathcal{M}$.
The decomposition of $p_{\mathcal{M}}\in S^1_{\mathcal{M}}$ on this basis
is written in the form $p_{\mathcal{M}}=\sum_{i=1}^r {p_{\mathcal{M}}}_i \xi_i$. Let
\begin{equation}
u^{\rm stab}_\mathcal{M}=\begin{pmatrix}
\begin{array}{c}
({\Phi_{\mathcal{M}}}_i)_{~{1\leq i\leq p}} \\
({\lambda_{\mathcal{M}}}_i)_{~{1\leq i\leq q}}\\
({p_{\mathcal{M}}}_i)_{~{1\leq i\leq r}}
\end{array}
\end{pmatrix},\qquad
B^{\rm stab}=\begin{pmatrix}
\begin{array}{c}
 \left({\gamma_1}f_{\rm inc}, \gamma_0^-\theta_i\right)_{{\Gamma_\infty}~{1\leq i\leq p}}\\
-\left({\gamma_0}f_{\rm inc},\psi_i\right)_{{\Gamma_\infty}~{1\leq i\leq q}}\\
 \left({\gamma_1}f_{\rm inc}, \xi_i\right)_{{\Gamma_\infty}~{1\leq i\leq r}}
\end{array}
\end{pmatrix},
\end{equation}
\begin{equation}
A^{\rm stab}=\begin{pmatrix}
\begin{array}{c|c|c}
\mathcal{V}(\theta_j,\theta_i)+{\left(N(\gamma_0^-\theta_j),\gamma_0^-\theta_i\right)_{\Gamma_{\infty}}} &
{\left(\left({\tilde{D}}-\frac{1}{2}I\right)(\psi_j),\gamma_0^-\theta_i\right)_{\Gamma_{\infty}}} &
0 \\
\hline
{\left(\left({D}-\frac{1}{2}I\right)(\gamma_0^-\theta_j),\psi_i\right)_{\Gamma_{\infty}}} &
-{\left(S(\psi_j),\psi_i\right)_{\Gamma_{\infty}}} &
i\overline{\eta}{\left(\xi_j,\psi_i\right)_{\Gamma_{\infty}}}\\
\hline
{\left(N(\gamma_0^-\theta_j),\xi_i\right)_{\Gamma_{\infty}}} &
{\left(\left({\tilde{D}}-\frac{1}{2}I\right)(\psi_j),\xi_i\right)_{\Gamma_{\infty}}}&
-\delta_{\Gamma_\infty}{\left(\xi_j,\xi_i\right)_{\Gamma_{\infty}}}\\
\end{array}
\end{pmatrix},
\end{equation}
with the same convention as above on the indices $i$ and $j$ of $A^{\rm stab}$.
The linear system resulting from~\eqref{eq:coupledvfst_num} is
\begin{equation}
\label{eq:matstab}
A^{\rm stab}u^{\rm stab}_\mathcal{M}=B^{\rm stab}.
\end{equation}

As in the previous section, the matrix of the linear
  system~\eqref{eq:matstab} is further decomposed as
\begin{equation}
\label{eq:matrix:st:blocks}
A^{\rm stab}=\begin{pmatrix}
\begin{array}{c|c|c|c}
A^{\rm stab}_{1,1} &
A^{\rm stab}_{1,2} &
0 &
0
\\
\hline
A^{\rm stab}_{2,1} &
A^{\rm stab}_{2,2} &
A^{\rm stab}_{2,3}&
0\\
\hline
0 &
A^{\rm stab}_{3,2} &
A^{\rm stab}_{3,3}&
A^{\rm stab}_{3,4}\\
\hline
0 &
A^{\rm stab}_{4,2} &
A^{\rm stab}_{4,3}&
A^{\rm stab}_{4,4}
\end{array}
\end{pmatrix},
\end{equation}
where $A^{\rm stab}_{1,1}$, $A^{\rm stab}_{1,2}$, $A^{\rm stab}_{1,3}$, $A^{\rm stab}_{2,1}$, $A^{\rm stab}_{2,2}$,
$A^{\rm stab}_{2,3}$, $A^{\rm stab}_{3,2}$ and $A^{\rm stab}_{3,3}$ are the same as their corresponding counterparts in~\eqref{eq:matrix:unst:blocks},
and $\left(A^{\rm stab}_{3,4}\right)_{i,j}=i\overline{\eta}{\left(\xi_j,\psi_i\right)_{\Gamma_{\infty}}}$,
$\left(A^{\rm stab}_{4,2}\right)_{i,j}={\left(N(\gamma_0^-\theta^{\mathcal{F}_\infty}_j),\xi_i\right)_{\Gamma_{\infty}}}$,
$\left(A^{\rm stab}_{4,3}\right)_{i,j}={\left(\left({\tilde{D}}-\frac{1}{2}I\right)(\psi_j),\xi_i\right)_{\Gamma_{\infty}}}$ and
$\left(A^{\rm stab}_{4,4}\right)_{i,j}=-\delta_{\Gamma_\infty}{\left(\xi_j,\xi_i\right)_{\Gamma_{\infty}}}$.
All the blocks are complex-valued.
The blocks $A^{\rm stab}_{3,3}$ and $A^{\rm stab}_{4,4}$ are sparse, whereas the
blocks $A^{\rm stab}_{4,2}$ and $A^{\rm stab}_{4,3}$ are dense. The block $A^{\rm stab}_{4,4}$ is symmetric.

\subsection{Numerical resolution}

Both the unstable and stable formulations have been implemented in the EADS in-house boundary element software called ACTIPOLE.
This software can treat general three-dimensional geometries. The iterative solver
is a GMRES solver~\cite{GMRES} with no restart,
suitable for non-symmetric linear systems (an additional
  feature of the solver is that it can treat multiple right-hand sides~\cite{Langou, blockgcr}).
The specificity of each block in~\eqref{eq:matrix:unst:blocks} or~\eqref{eq:matrix:st:blocks} is taken into account.
Matrix-vector products involving sparse blocks are
optimized accordingly, and those involving dense blocks resulting from
boundary integral terms can be accelerated using a fast multipole method
and out-of-core parallelization techniques.

The preconditioner uses a combination of a sparse approximate inverse (SPAI) preconditioner~\cite{Carpentieri, precomp}
and the sparse direct solver MUMPS~\cite{MUMPS}.
More precisely, for the dense diagonal blocks $A^{\rm unstab}_{2,2}$, $A^{\rm unstab}_{3,3}$ and $A^{\rm stab}_{2,2}$,
$A^{\rm stab}_{3,3}$, the SPAI preconditioner searches for an
approximation of the inverse of these blocks. Letting $A$ denote any of
these blocks, 
$A$ is made sparse by keeping, in each column, the interaction terms between the corresponding basis function and
the ones in its vicinity (in the sense of vertices or faces). The result of this operation is denoted
by $A^{\rm sp}$, and we define the set of matrices having the same
sparsity pattern, i.e., $\mathcal{S}_{A^{\rm sp}}=\{M\in\mathbb{C}^{n,n}~|~M_{i,j}=0 \,~\forall~1\leq~i,j\leq~n \mbox{ s.t. }
A^{\rm sp}_{i,j}~=~0\}$ where $n$ is the number of rows of $A$.
The SPAI preconditioner of $A$ is then given by $P=\underset{M\in \mathcal{S}_{A^{\rm sp}}}{\rm argmin}{\|A^{\rm sp}M-I
\|_{F}}$, where $\|\cdot\|_F$ denotes the Frobenius norm.
For the blocks $A^{\rm unstab}_{2,2}$ and $A^{\rm stab}_{2,2}$, the SPAI preconditioner is computed ignoring
the volumic contributions. For the sparse diagonal blocks, the preconditioner is taken as the inverse of each block.
The inverse is not actually computed: since MUMPS provides a factorization of each of these blocks, each time a
preconditioner-vector product is needed when constructing the Krylov vectors of the iterative method, two triangular systems
are efficiently solved using this factorization. The preconditioner for the whole system is block diagonal, each block being
a SPAI or MUMPS preconditioner.

\section{Numerical results}
\label{sec:numres}

The purpose of this section is the comparison between the unstable formulation~\eqref{eq:coupledvf}
and the stable formulation~\eqref{eq:weakcouptrans} with the coupling parameter $\eta=1$.

Consider an ellipsoid with major axis directed along the $z$-axis. This object is included inside a larger ball.
The external border of the ball after discretization is the surface $\Gamma_\infty$. A potential flow is computed around the ellipsoid and inside the ball,
such that the flow is uniform outside the ball, of Mach number $0.3$ and directed along the $z$-axis.
An acoustic monopole source lies upstream of the ball, on the $z$-axis as well.
Four different meshes are considered, see Table~\ref{tabmesh}.
For accuracy reasons, a rule of thumb in boundary
  element methods for the classical Helmholtz equation is to set the
  mean edge length to a value eight to ten times smaller than the wavelength of the source.
In our simulations, we first generate the mesh and then apply the
Prandtl--Glauert transformation. With the present choice for the Mach number,
the mesh is at most extended by a factor $\gamma_\infty \approx 1.048$. Moreover, the integral operators are computed at
the transformed wavenumber $\hat{k}_\infty=\gamma_\infty k_\infty
\approx 0.21$~m.
We then verify that, for Mesh~1, the mean length of the edges of the transformed mesh
is eight times smaller than the wavelength. The three coarser meshes do
not satisfy the rule of thumb and are used as comparison supports and in
numerical experiments requiring a large number of resolutions.

From Table~\ref{tabmesh}, for fine meshes, the number of basis functions used to discretize the unknown $p$ in the variational
formulation~\eqref{eq:weakcouptrans} takes a smaller
part in the total number of basis functions than for coarse meshes (from 20\% down to 12\%).
Therefore, the relative complexity added to
\eqref{eq:coupledvf} by the third equation of~\eqref{eq:weakcouptrans} decreases with the total number of unknowns, which
is an interesting property when it comes to industrial test cases.
Figure~\ref{mesh1} displays Mesh 1 and the rescaled velocity $\vec{M}_0$ of the potential flow.

In what follows, a frequency $\mathtt{f}$ is called resonant if $-\hat{k}_\infty^2=-\frac{4\pi^2\mathtt{f}^2}{\gamma_\infty^2}\in\Lambda$,
where $\Lambda$ is the set of Dirichlet eigenvalues for the Laplacian on $\mathbb{R}^3\backslash\overline{\Omega^+}$.
The set $\Lambda$ depends on the shape of the coupling surface $\Gamma_\infty$, which slightly changes after each
discretization.

\renewcommand{\arraystretch}{1.5}
\begin{table}
\begin{center}
\begin{tabular}{|c |c |c |c |c |}
 \cline{2-5}
\multicolumn{1}{c|}{} & Mesh 1 & Mesh 2 & Mesh 3 & Mesh 4\\
 \hline
number of volumic dofs $\Phi$ & $1796$ & $687$ & $194$ & $79$ \\
 \hline
number of surfacic $\mathbb{P}_0$ dofs $\lambda$ & $808$ & $510$ & $270$ & $148$\\
 \hline
number of surfacic $\mathbb{P}_1$ dofs $p$ & $406$ & $257$ & $137$ & $76$\\
 \hline
proportion of dofs $p$ in the total number of dofs & $11.9\%$ & $15.0\%$ & $18.6\%$ & $20.0\%$\\
 \hline
smallest edge (mm) & $7.09$ & $8.78$ & $15.71$ & $19.18$\\
 \hline
mean edge (mm) & $22.64$ & $32.20$ & $49.78$ & $66.46$\\
 \hline
largest edge (mm) & $56.87$ & $70.62$ & $103.59$ & $112.71$\\
 \hline
\end{tabular}
\end{center}
\caption{\label{tabmesh} Characteristics of the four considered meshes.}
\end{table}

\begin{figure}[h!]
 \centering
\includegraphics[width=0.47\textwidth]{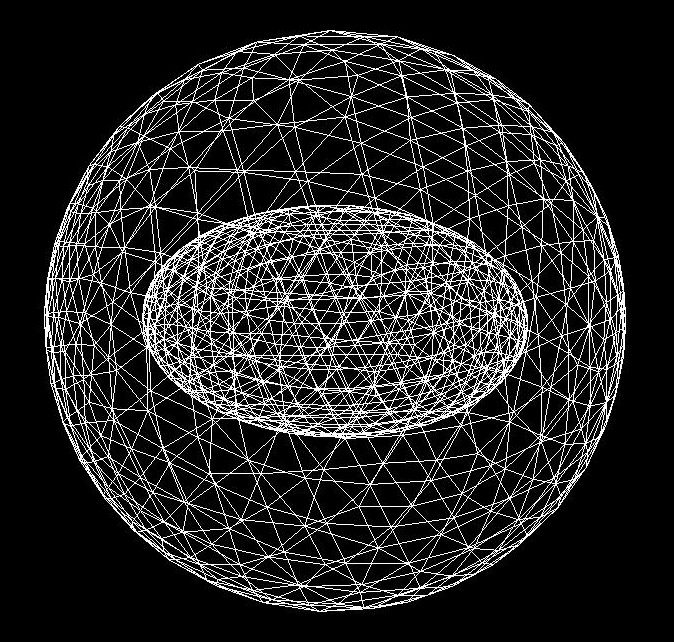}
\includegraphics[width=0.5\textwidth]{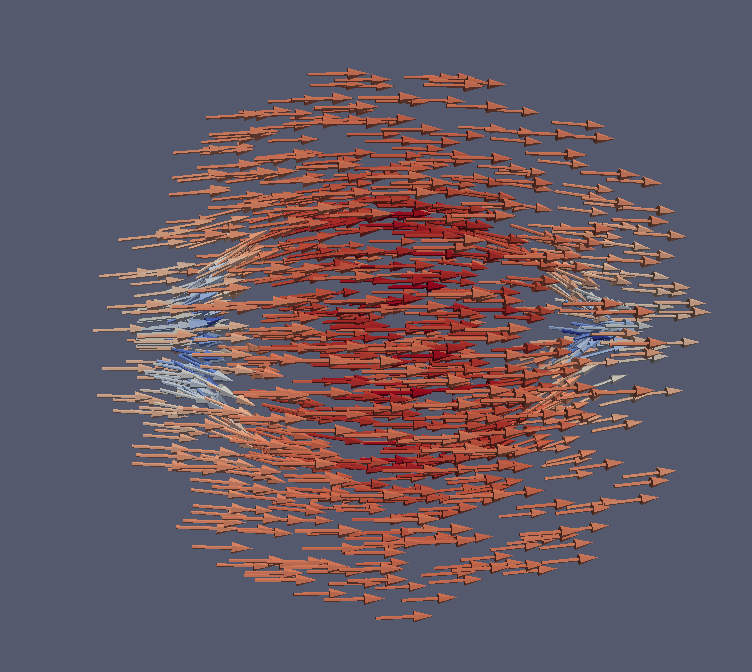}
 \caption{Left: representation of Mesh 1, Right: potential flow around the ellipsoid.}
\label{mesh1}
\end{figure}


\subsection{Comparison of pressure fields}

As seen in Theorem~\ref{theounstab}, the unstable formulation~\eqref{eq:coupledvf} is not well-posed at resonant frequencies.
First, a prospective study to identify a resonant frequency for each of the four meshes is carried out by monitoring
the condition number of the corresponding matrices.
A resonant frequency for Mesh 1, Mesh 2, Mesh 3, and Mesh 4 is identified around $1509.849$~Hz, $1513.431$~Hz, $1521.015$~Hz,
and $1535.704$~Hz, respectively.

The convergence of the iterative solver is monitored by requiring that the Euclidian norm of the relative residual
is smaller than $10^{-6}$.
Additional tests indicate that the discretized solution to the stable formulation does not change much below this value of the
relative residual.
For Mesh 1, away from a resonance, say at $1500$~Hz, the scattered pressure fields computed with the unstable and 
stable formulations are very similar. This holds as well for the total pressure fields, see Figure
\ref{pres1}. At the resonant frequency $1509.849$~Hz, the unstable formulation~\eqref{eq:coupledvf} yields pressure maps quite different
from the ones at $1500$~Hz, whereas the stable formulation~\eqref{eq:weakcouptrans} yields pressure maps very similar to the ones at $1500$~Hz,
see Figure~\ref{pres2}.
The distortion of the scattered field with the unstable
formulation~\eqref{eq:coupledvf} is the result of the significant magnification of numerical errors by the ill-conditioning
of the linear system approximating~\eqref{eq:coupledvf}.


\begin{figure}[h!]
 \centering
\includegraphics[width=0.45\textwidth]{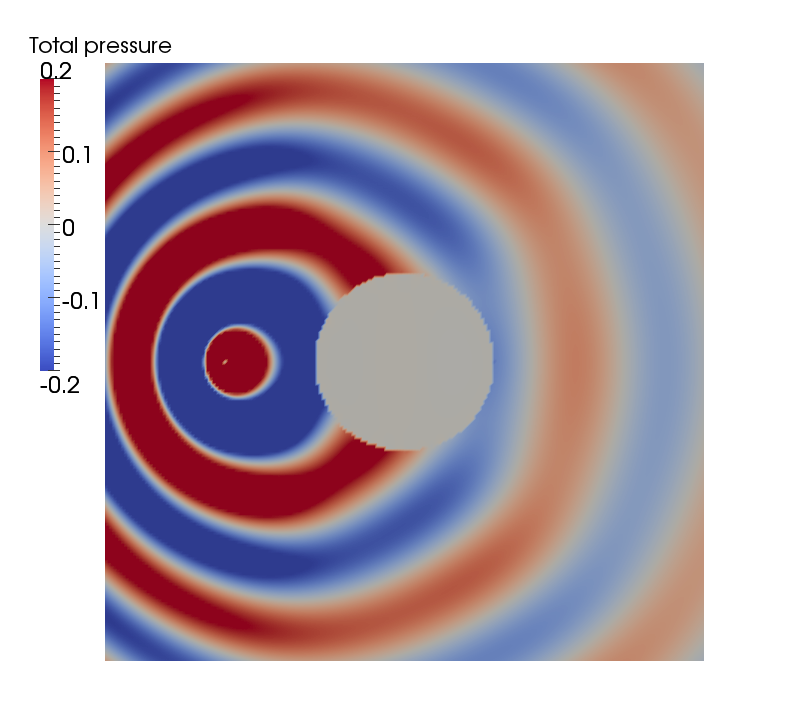}
\includegraphics[width=0.45\textwidth]{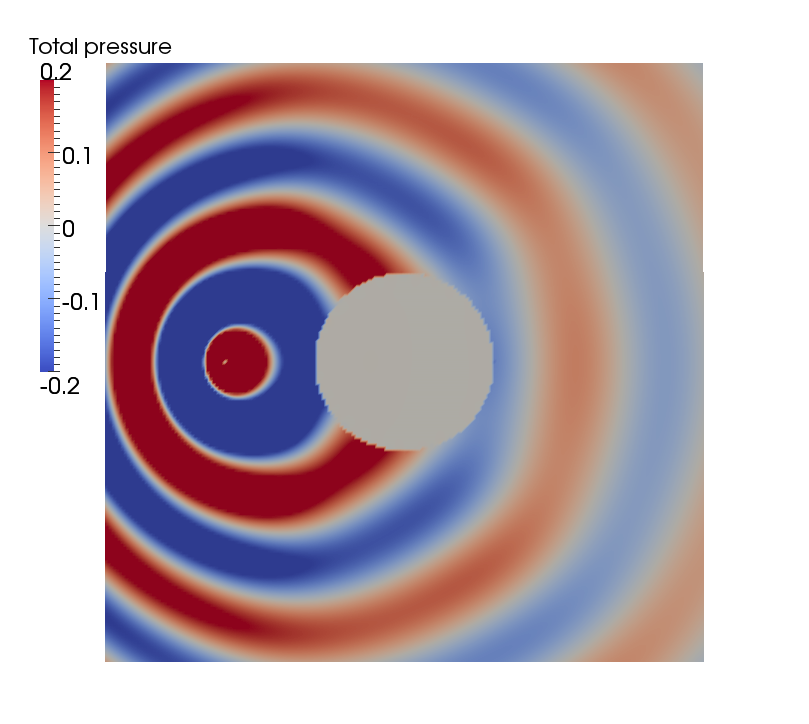}\\
\includegraphics[width=0.45\textwidth]{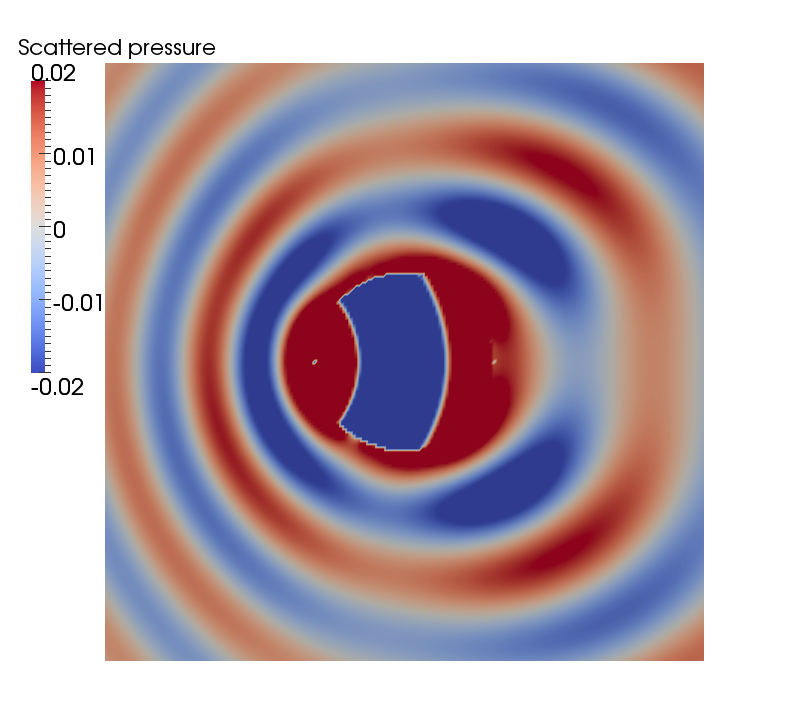}
\includegraphics[width=0.45\textwidth]{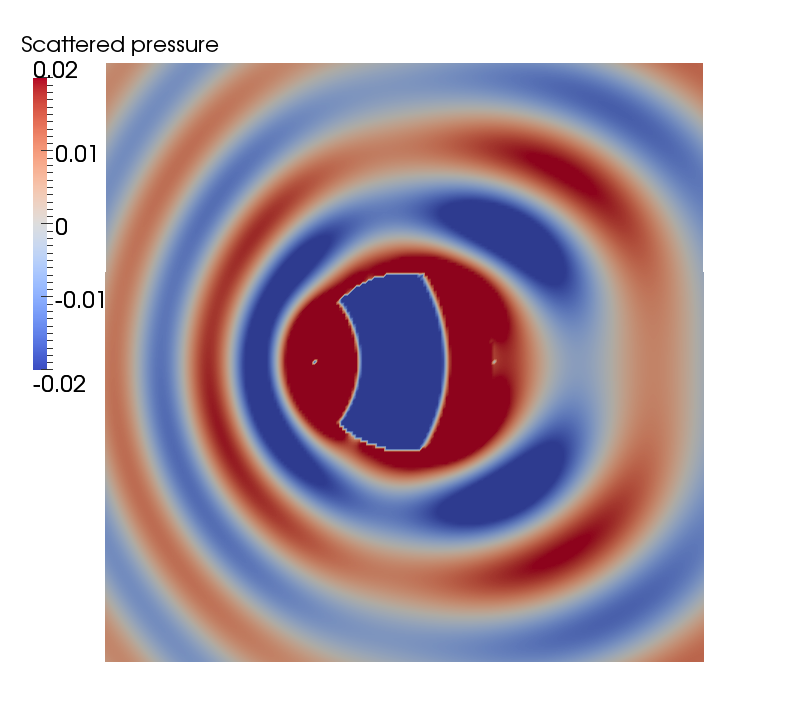}
 \caption{Mesh 1, $1500$~Hz. Top : real part of the total pressure; left: unstable formulation~\eqref{eq:coupledvf},
right: stable formulation~\eqref{eq:weakcouptrans}.
Bottom : real part of the scattered pressure; left: unstable formulation~\eqref{eq:coupledvf},
right: stable formulation~\eqref{eq:weakcouptrans}.
At this non-resonant frequency, both formulations yield similar results.}
\label{pres1}
\end{figure}


\begin{figure}[h!]
 \centering
\includegraphics[width=0.45\textwidth]{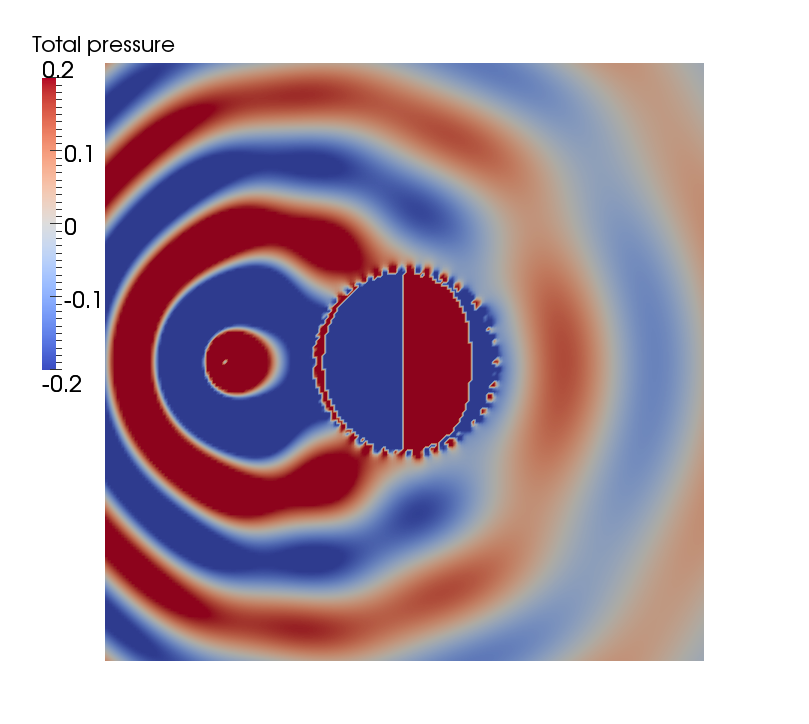}
\includegraphics[width=0.45\textwidth]{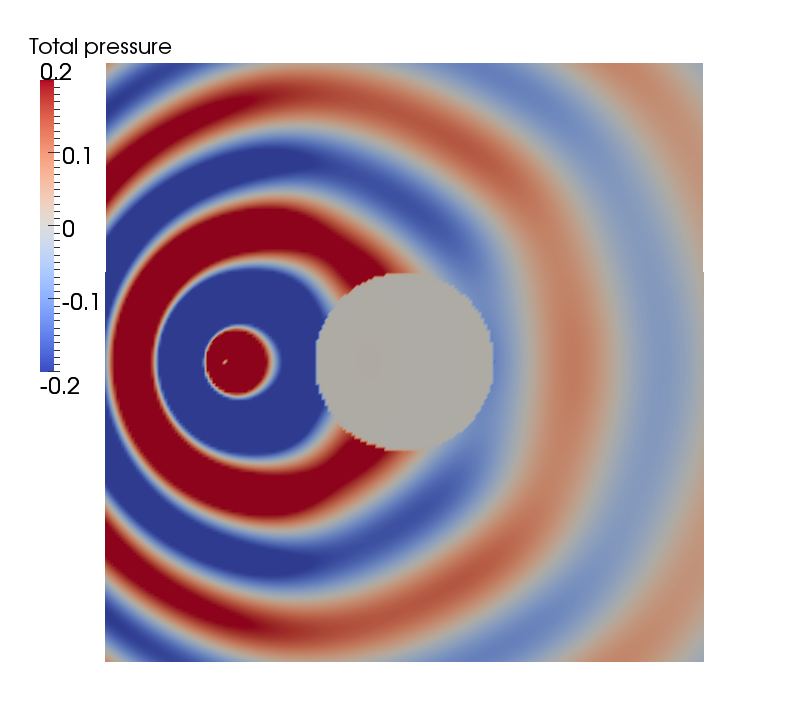}\\
\includegraphics[width=0.45\textwidth]{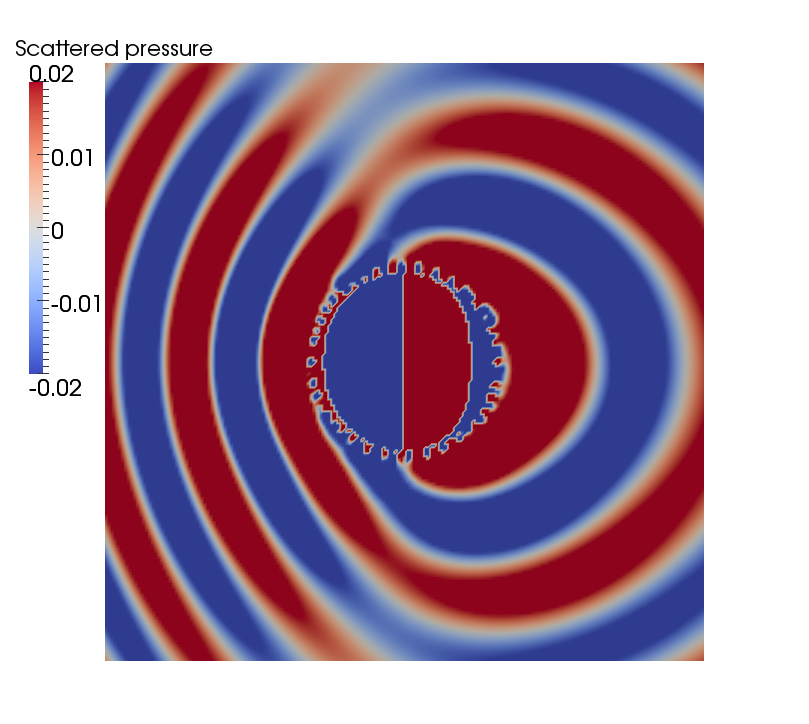}
\includegraphics[width=0.45\textwidth]{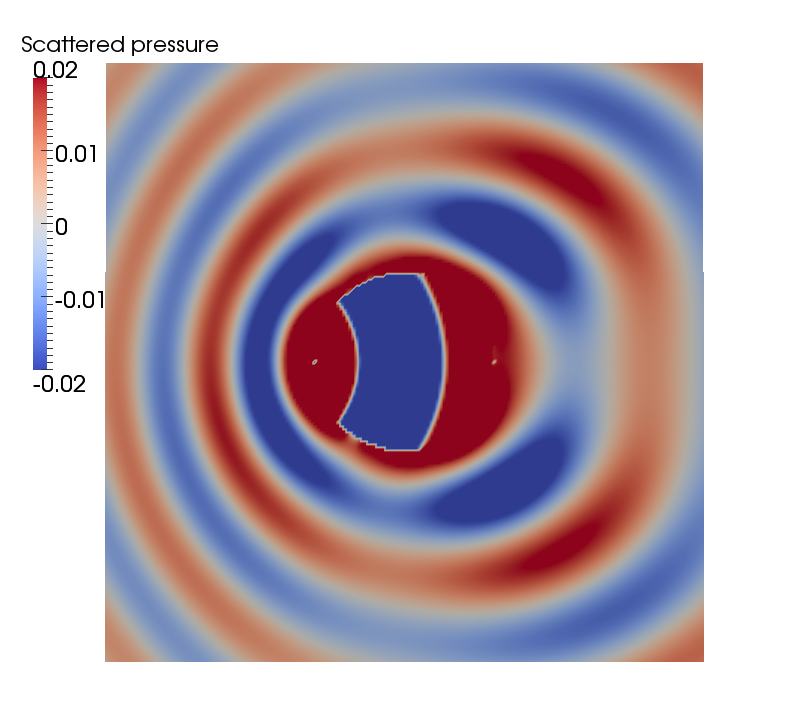}
 \caption{Mesh 1, $1509.849$~Hz. Top : real part of the total pressure; left: unstable formulation~\eqref{eq:coupledvf},
right: stable formulation~\eqref{eq:weakcouptrans}.
Bottom : real part of the scattered pressure; left: unstable formulation~\eqref{eq:coupledvf},
right: stable formulation~\eqref{eq:weakcouptrans}.
At this resonant frequency, the two formulations yield different results.}
\label{pres2}
\end{figure}

\subsection{Auxiliary variable $p$}

In Figure~\ref{fig:dummy1}, the left plot indicates that with Mesh 1,
the magnitude of $p$ is around $0.5\%$ of the scattered pressure.
The right plot shows the behaviour of the magnitude of $p$ (measured as $\|p\|_{L^\infty(\Gamma_\infty)}$)
with respect to the stopping criterion of the iterative solver for
the four meshes.
The finer the mesh, the smaller the auxiliary variable $p$, which is consistent with
the fact that the $p$-component of the solution to~\eqref{eq:weakcouptrans} vanishes (see Section~\ref{sec:stabcoupl}).


\setlength\figureheight{0.43\textwidth}
\setlength\figurewidth{0.45\textwidth}
\begin{figure}[h!]
   \begin{minipage}[c]{.48\linewidth}
\includegraphics[width=1.05\textwidth]{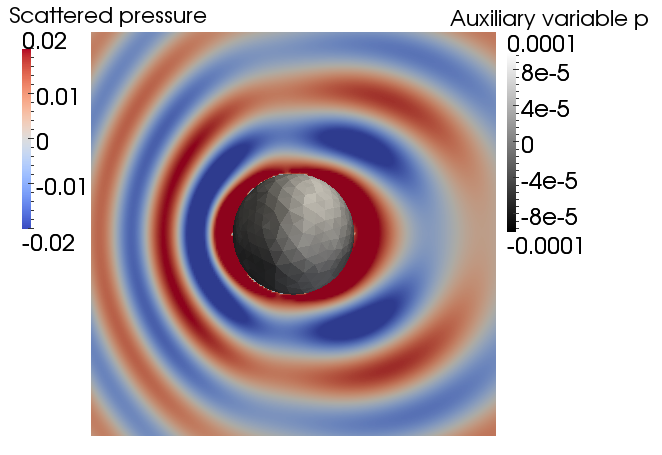}
   \end{minipage} \hfill
   \begin{minipage}[c]{.48\linewidth}
%
%
%
%
\begin{tikzpicture}

\begin{loglogaxis}[
xlabel={stopping criterion of the iterative solver},
ylabel={magnitude of the auxiliary variable $p$},
xmin=1e-06, xmax=0.1,
ymin=0.0001, ymax=0.1,
axis on top,
width=\figurewidth,
height=\figureheight,
legend style={at={(0.03,0.97)}, anchor=north west},
legend entries={Mesh 1,Mesh 2,Mesh 3,Mesh 4}
]
\addplot [green!50.0!black]
coordinates {
(0.1,0.0170174) (0.01,0.001062111) (0.001,0.0001447102) (0.0001,0.0001435521) (1e-05,0.00014751089) (1e-06,0.0001470233) 
};
\addplot [red]
coordinates {
(0.1,0.04252) (0.01,0.001757) (0.001,0.0002871) (0.0001,0.000263) (1e-05,0.0002722) (1e-06,0.0002728) 
};
\addplot [black]
coordinates {
(0.1,0.02401742) (0.01,0.00100374) (0.001,0.000509612) (0.0001,0.000508785) (1e-05,0.000510179) (1e-06,0.000510036) 
};
\addplot [blue]
coordinates {
(0.1,0.0170964) (0.01,0.001480005) (0.001,0.001150991) (0.0001,0.001097344) (1e-05,0.001101953) (1e-06,0.001101317) 
};
\path [draw=black, fill opacity=0] (axis cs:13,1)--(axis cs:13,1);

\path [draw=black, fill opacity=0] (axis cs:1,13)--(axis cs:1,13);

\path [draw=black, fill opacity=0] (axis cs:13,0)--(axis cs:13,0);

\path [draw=black, fill opacity=0] (axis cs:0,13)--(axis cs:0,13);

\end{loglogaxis}

\end{tikzpicture}
   \end{minipage}
 \caption{Stable formulation~\eqref{eq:weakcouptrans} at $1500$~Hz. Left : real part of the scattered pressure and auxiliary variable $p$
with Mesh 1. Right : Magnitude of the auxiliary variable $p$ as a function of the stopping criterion of the iterative linear solver
with all meshes.}
\label{fig:dummy1}
\end{figure}

\subsection{Comparison of condition numbers}

Figure~\ref{fig:narrow} presents the condition numbers of 
the matrices resulting from the formulations~\eqref{eq:coupledvf} and~\eqref{eq:weakcouptrans} as a function of the frequency.
In the left plot, the curves are centered at the resonant frequencies.
The finer the mesh, the higher the condition number explodes. The width
of the peak at the resonance does not appear to depend on the mesh. In the right plot, a larger frequency bandwidth is considered
with Mesh~$2$.
Owing to the frequency sampling (every $5$~Hz), some resonances may be missed, and the local maxima may not be accurately reached
(in particular, from the left plot, the local maximum of $7.2$ for log(cond($M$)) at $1513.431$~Hz is very underestimated).
The stable formulation~\eqref{eq:weakcouptrans} produces somewhat larger condition numbers for the large majority of the
frequencies, but, unlike the unstable formulation~\eqref{eq:coupledvf}, it presents no resonance.
Moreover, from the Weyl formula,
the number of resonant frequencies smaller than $\mathtt{f}$ increases as $\mathtt{f}^{\frac{3}{2}}$, making
the need for a stable formulation even more important for simulations at higher frequencies.


\begin{figure}[h!]
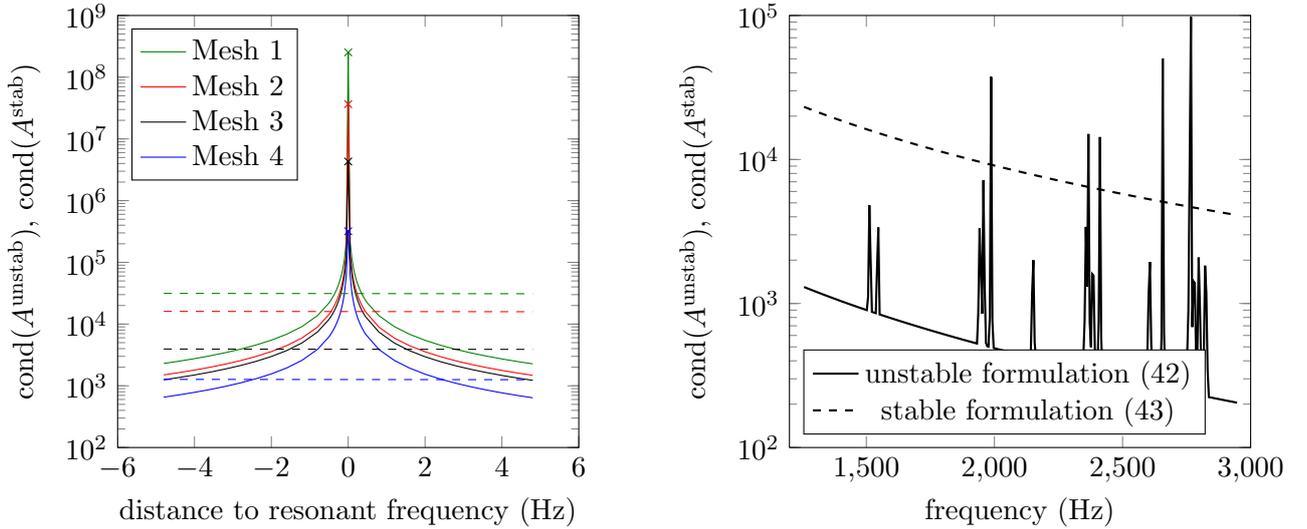

   \begin{minipage}[c]{.48\linewidth}
\include{cond.tikz}
   \end{minipage} \hfill
   \begin{minipage}[c]{.48\linewidth}
\include{condwide.tikz}
   \end{minipage}
 \caption{Condition number of the matrix for the unstable formulation~\eqref{eq:coupledvf} (solid) and the stable formulation
\eqref{eq:weakcouptrans} (dashed). Left: centered representation around a resonant frequency for the four meshes.
Right: larger bandwidth with Mesh 2.}
\label{fig:narrow}
\end{figure}

\subsection{Convergence}

To further study the impact of the ill-conditioning of the unstable formulation~\eqref{eq:coupledvf} on the computed solution,
the preconditioning is not used in what follows. First, the value of the acoustic pressure on a network of $10000$
points located further than $0.5$ m from the center of the sphere (therefore in $\Omega^+$) is computed
using the stable formulation~\eqref{eq:weakcouptrans} with Mesh 1 at the resonant frequency $1509.849$~Hz.
This computed acoustic pressure is called the accurate pressure.
Next, the acoustic pressure on the same network of points is computed for different values of the number of iterations of the
solver, using the unstable formulation~\eqref{eq:coupledvf} and the stable formulation~\eqref{eq:weakcouptrans} with Mesh 1 at the same frequency.
The relative difference between the computed pressure and the accurate pressure in Euclidian norm is called the relative error.
Figure~\ref{fig:conv} presents the relative residual and the relative error with respect to the number of iterations.
With the unstable formulation~\eqref{eq:coupledvf}, the relative residual decreases irregularly. In particular,
it stays constant during around $200$ iterations. 
The relative error decreases, stays constant, rises after $400$ iterations,
and finally stabilizes at a large value, whereas the relative residual keeps converging to zero.
As for every ill-conditioned problem, the relative residual cannot be used
to ascertain convergence towards the correct solution.
In particular, after $600$ iterations, the relative residual is extremely small, while the error is of order one.
With the stable formulation~\eqref{eq:weakcouptrans}, the relative residual and the relative error decrease
regularly, and in the same fashion.


\setlength\figureheight{0.43\textwidth}
\setlength\figurewidth{0.45\textwidth}
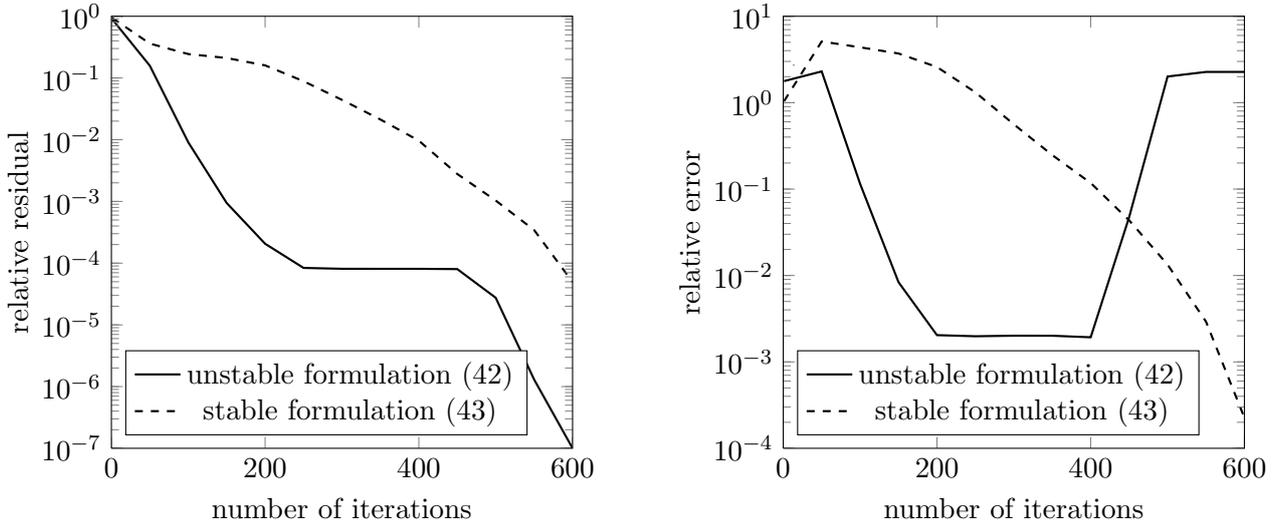
\begin{figure}[h!]
   \begin{minipage}[c]{.48\linewidth}
%
%
%
%
\begin{tikzpicture}

\begin{semilogyaxis}[
xlabel={number of iterations},
ylabel={relative residual},
xmin=0, xmax=600,
ymin=1e-07, ymax=1,
axis on top,
width=\figurewidth,
height=\figureheight,
legend style={at={(0.03,0.03)}, anchor=south west},
legend entries={unstable formulation  \eqref{eq:coupledvf},stable formulation \eqref{eq:weakcouptrans}}
]
\addplot [thick, black]
coordinates {
(1,0.8984425) (50,0.1579681) (100,0.0090716) (150,0.000942) (200,0.0002063) (250,8.37e-05) (300,8.1e-05) (350,8.09e-05) (400,8.09e-05) (450,8.01e-05) (500,2.75e-05) (550,1.3e-06) (600,1e-07) 
};
\addplot [thick, black, dashed]
coordinates {
(1,0.9403811) (50,0.361097) (100,0.2442476) (150,0.2107814) (200,0.1600748) (250,0.088739) (300,0.0442015) (350,0.0212418) (400,0.0096456) (450,0.002784) (500,0.0010342) (550,0.0003435) (600,4.93e-05) 
};
\path [draw=black, fill opacity=0] (axis cs:13,1)--(axis cs:13,1);

\path [draw=black, fill opacity=0] (axis cs:1,13)--(axis cs:1,13);

\path [draw=black, fill opacity=0] (axis cs:13,0)--(axis cs:13,0);

\path [draw=black, fill opacity=0] (axis cs:0,13)--(axis cs:0,13);

\end{semilogyaxis}

\end{tikzpicture}
   \end{minipage} \hfill
   \begin{minipage}[c]{.48\linewidth}
%
%
%
%
\begin{tikzpicture}

\begin{semilogyaxis}[
xlabel={number of iterations},
ylabel={relative error},
xmin=0, xmax=600,
ymin=0.0001, ymax=10,
axis on top,
width=\figurewidth,
height=\figureheight,
legend style={at={(0.03,0.03)}, anchor=south west},
legend entries={unstable formulation  \eqref{eq:coupledvf},stable formulation \eqref{eq:weakcouptrans}}
]
\addplot [thick, black]
coordinates {
(1,1.77556933957) (50,2.29957265782) (100,0.115137303055) (150,0.00837003341556) (200,0.00204025999332) (250,0.00198022326249) (300,0.00200981344827) (350,0.00200795531505) (400,0.00192632906879) (450,0.0472752701018) (500,2.00707584587) (550,2.26908752538) (600,2.26964708358) 
};
\addplot [thick, black, dashed]
coordinates {
(1,1.03838850164) (50,5.10713694066) (100,4.37373932915) (150,3.7082299273) (200,2.58574691448) (250,1.31723549998) (300,0.561265150296) (350,0.247534122656) (400,0.117674867498) (450,0.0435285780663) (500,0.0133601763604) (550,0.0029229648068) (600,0.000225731205522) 
};
\path [draw=black, fill opacity=0] (axis cs:13,1)--(axis cs:13,1);

\path [draw=black, fill opacity=0] (axis cs:1,13)--(axis cs:1,13);

\path [draw=black, fill opacity=0] (axis cs:13,0)--(axis cs:13,0);

\path [draw=black, fill opacity=0] (axis cs:0,13)--(axis cs:0,13);

\end{semilogyaxis}

\end{tikzpicture}
   \end{minipage}
 \caption{Mesh 1 at resonance $1509.849$~Hz; relative residual (left) and relative error (right) with respect to the number of iterations.}
\label{fig:conv}
\end{figure}

\subsection{Choice of the coupling parameter $\eta$}

In the stable formulation~\eqref{eq:weakcouptrans}, the choice of the coupling parameter $\eta$ is expected to have a direct
effect of the condition number of the matrix $A^{\rm stab}$. In Figure~\ref{fig:couplingparam}, this condition number
is plotted for Mesh 4 and for various values of $\eta$. For $\eta=0$,
equations~\eqref{eq:weakcouptrans1}-\eqref{eq:weakcouptrans2} are decoupled from~\eqref{eq:weakcouptrans3}, and
\eqref{eq:weakcouptrans1}-\eqref{eq:weakcouptrans2} become~\eqref{eq:coupledvf}, so that the curve for $\eta=0.001$
is similar to the curve of the unstable formulation for Mesh 4 in Figure~\ref{fig:narrow}. The condition number appears to be
the smallest for $\eta$ in the range 1 to 10, and worsens for lower and higher values of $\eta$. This motivates the choice
$\eta=1$ made in the above simulations.

\setlength\figureheight{0.5\textwidth}
\setlength\figurewidth{0.5\textwidth}
\begin{figure}[h!]
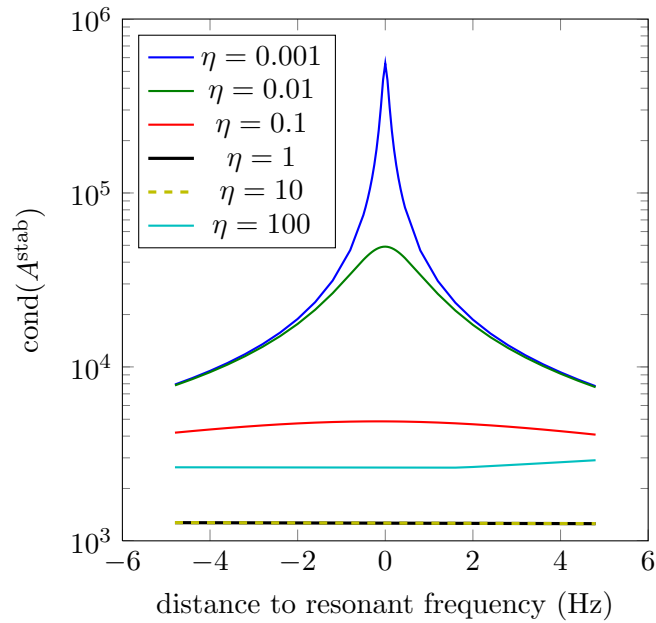

\center
\include{condM.tikz}
 \caption{Condition number of the matrix for the stable formulation~\eqref{eq:weakcouptrans} centered around the resonant
frequency at $1535.704$~Hz for Mesh 4. In this case, the chosen value $\eta=1$ leads to the minimal condition numbers.}
\label{fig:couplingparam}
\end{figure}

\section{Conclusion}
\label{sec:conclusion}

In this work, we derived two coupled formulations for the convected Helmholtz equation with
non-uniform flow in a bounded domain. The first formulation involves one surface unknown and is well-posed except at some resonant frequencies
of the source, while the second formulation is unconditionally well-posed and involves two surface unknowns.
Our numerical results show
that at resonant frequencies, the discretization of the first formulation is ill-conditioned so that the
pressure field is plagued by spurious oscillations. Moreover, the second formulation remains tractable
within large industrial problems since the relative complexity added by
the second surface unknown decreases with the size of the mesh.
The interest in the second formulation is also enhanced by the fact that, at higher frequencies, the density of resonant frequencies is more important.

As long as the uniform flow assumption in the exterior domain is reasonable, more
complex flows in the interior domain can be considered, as well as more complex boundary conditions at the surface of the
scattering object. These extensions only require to modify the finite element part of the present methodology.

Another interesting extension of this work is the resolution of parametrized aeroacoustic problems, with the frequency of the source
as a parameter, using reduced-order models, for instance by means of Proper Generalized Decomposition or Reduced Basis methods.
Using the first formulation may involve ill-conditioned numerical resolutions if the frequency range of interest
contains resonant frequencies, whereas the second formulation guarantees well-posedness of the procedure. Moreover,
the complexity of the online stage of the reduced-order model is not
increased when using the second formulation.

\section*{Acknowledgement}
This work was partially supported by EADS Innovation Works. The authors thank Toufic Abboud (IMACS), Nolwenn Balin
(EADS Innovation Works), Fran\c{c}ois Dubois (CNAM), Patrick Joly (INRIA), and Tony Leli\`evre (CERMICS) for fruitful
discussions.



\bibliographystyle{plain}


\end{document}